\theoremstyle{plain}
\newtheorem{theorem}{Theorem}[section]
\newtheorem{lemma}{Lemma}[section]
\newtheorem{prop}{Proposition}[section]
\newcommand{\ignore}[1]{}
\newtheorem{remark}{Remark}[section]
\def\eps{\varepsilon}
\def\ds{\mathrm{d}\mathbf{s}}
\def\dx{\mathrm{d}\mathbf{x}}
\def\ds{\mathrm{d}s}
\begin{document}

\begin{frontmatter}



\title{An efficient threshold dynamics method for wetting on rough surfaces}

\author[a]{Xianmin Xu}
\ead{xmxu@lsec.cc.ac.cn}
\author[b]{Dong Wang}
\ead{dwangaf@connect.ust.hk}
\author[b]{Xiao-Ping Wang\corref{cor1}}
\ead{mawang@ust.hk}

\
\cortext[cor1]{Corresponding author}

\address[a]{LSEC, Institute of Computational
  Mathematics and Scientific/Engineering Computing,
  NCMIS, AMSS, Chinese Academy of Sciences, Beijing 100190, China}
  \address[b]{Department of Mathematics, The Hong Kong University of
                       Science and Technology, Hong Kong}

\begin{abstract}
The threshold dynamics method developed by Merriman, Bence and Osher (MBO)  is an efficient method for  simulating the motion by mean curvature flow when the interface is away from the solid boundary.  Direct generalization of  MBO-type methods to the wetting problem with interfaces  intersecting the solid boundary  is not easy because solving the heat equation in a general domain with a wetting boundary condition is not as efficient as it is with the original MBO method.  The dynamics of the contact point also follows a different  law compared with the dynamics of the interface away from the boundary.  In this paper,  we develop an efficient volume preserving threshold dynamics method for simulating wetting  on rough surfaces. This method is based on minimization of the weighted surface area functional over an extended domain that includes the solid phase.   The method is  simple, stable with $O(N \log N)$  complexity per time step and is not sensitive to the inhomogeneity or roughness  of the solid boundary.
\end{abstract}

\begin{keyword}
Threshold dynamics method, wetting, rough surface
\end{keyword}

\end{frontmatter}


\section{Introduction}

Wetting describes how a liquid drop  spreads on a solid surface.
The most important quantity in wetting is the contact angle
between the liquid surface and the solid surface \cite{Gennes03}.
When the solid surface is homogeneous, the contact angle for a static drop is given by the famous
Young's equation:
\begin{equation}\label{e:Young}
\cos\theta_Y=\frac{\gamma_{SV}-\gamma_{SL}}{\gamma_{LV}},
\end{equation}
where $\gamma_{SL}$, $\gamma_{SV}$ and $\gamma_{LV}$ are the solid-liquid, solid-vapor
and liquid-vapor surface energy densities, respectively. $\theta_Y$ is the so-called Young's angle \cite{Young1805}.
Mathematically,  Young's equation \eqref{e:Young} can be
derived by minimizing the total energy in the solid-liquid-vapor system. If we ignore gravity,
the total energy in the system can be written as
\begin{equation}\label{e:energy}
\mathcal{E}=\gamma_{LV}|\Sigma_{LV}|+\gamma_{SL}|\Sigma_{SL}|+\gamma_{SV}|\Sigma_{SV}|,
\end{equation}
where $\Sigma_{LV}$, $\Sigma_{SL}$ and $\Sigma_{SV}$ are respectively the liquid-vapor, solid-liquid and
solid-vapor interfaces, and $|\cdot|$ denotes the area of the interfaces.
When the solid surface $\Gamma$ is a  homogeneous planar surface,
under the condition that the volume of the drop
is fixed, the unique minimizer of the total  energy is a domain with a spherical surface in $\Omega$,
and the contact angle between the surface and the solid surface $\Gamma$ is Young's angle $\theta_Y$\cite{Whyman08}.

The study of wetting and contact angle hysteresis on rough surfaces is of critical importance for many applications and has
attracted much interest in the physics and applied mathematics communities
\cite{Quere08,Extrand02,Alberti05,XuWang2013,erbil2014}.
Numerical simulation of  wetting on rough surfaces is  challenging.  One must track the interface motion accurately, as well as  deal with complicated boundary shapes and boundary conditions.  There are many different types of numerical methods for solving interface and contact line problems,  including the
front-tracking method \cite{womble1989front,leung2009grid}, the front-capturing method using the level-set function \cite{zhao1996variational}, the phase-field methods \cite{elliott2003computations,bertozzi2007inpainting}, among others \cite{deckelnick2005computation}.

 Merriman, Bence and Osher (MBO) developed an efficient threshold dynamics method  to simulate the motion by mean curvature flow \cite{merriman1992diffusion,merriman1994motion}. This method is based on the observation
that the level-set of the solution of a  heat equation moves in normal direction
 at a velocity equal to the mean curvature of the level-set surface.   The method  alternately diffuses and sharpens characteristic
functions of regions and is easy to implement and highly efficient. The method was also  extended to problems with volume preservation \cite{ruuth2003simple, kublik2011algorithms}
and to some high-order geometric flow problems \cite{esedoglu2008threshold}.
Recently, Esedoglu and Otto  extended the threshold dynamics method to the multi-phase problems with arbitrary surface tension \cite{esedoglu2013threshold}. There have been many studies on the MBO threshold dynamics method, including some efficient implementations
\cite{ruuth1998efficient,ruuth1998diffusion,Svadlenka2014} and convergence analysis \cite{barles1995simple,evans1993convergence,chambolle2006convergence,ishii2005optimal}.
 In particular, Laux and collaborators  established the convergence
of some computational algorithms including one with volume preservation\cite{Laux16a,Laux16b}.

The generalization of MBO-type methods to the wetting problem where interfaces intersecting the boundary is not straightforward  because of  a  lack of integral representation with a heat kernel for a general domain.
In the original MBO scheme, when the interface does not intersect the solid boundary,  one can solve the heat equation efficiently on a rectangular domain with a uniform grid using convolution of the heat kernel with the initial condition \cite{ruuth1998efficient,ruuth1998diffusion}.  The convolution can be evaluated using fast Fourier transform (FFT)  at $N\log N$ cost per time step where $N$ is the total number of grid points.
One way to generalize  MBO-type methods to wetting on solid surfaces  is to solve the heat equation  with a wetting boundary condition before the  volume-preserving thresholding step.  However,  the usual fast algorithms cannot be applied for this case,  especially when the boundary is rough.

In this paper, we aim to develop an efficient  volume-preserving threshold dynamics method
for solving wetting problems on rough surfaces.
Our method is based on the approach of  Esedoglu-Otto  \cite{esedoglu2013threshold}.
 The key idea  is
to extend the original domain with a  rough boundary to a regular cube and treat the solid part as another phase.  In the thresholding step, the solid phase domain remains unchanged.
We show that the algorithm has the total interface energy decaying property and our numerical results show that the equilibrium interface satisfies  Young's equation near the contact point. The advantage of the method is that it can be implemented efficiently on uniform meshes with a fast algorithm (e.g. FFT) since the computational domain is rectangular and we can simulate wetting on rough boundaries of any shape. We also introduce a fast algorithm for volume preservation based on a quick-sort algorithm and a time refinement scheme to improve the accuracy of the solution at
the contact line.



The  paper proceeds as follows. In Section 2, we introduce the surface energies of the wetting problem. A direct (but less efficient) MBO-type threshold dynamics
method for solving wetting problems is also described.  In Section 3, we introduce a  new threshold dynamics method which is simple, efficient and  easy to implement. Several  modifications of the method are also discussed.  { In Section 4,
we discuss the implementation of the algorithm and perform the accuracy check.  We also introduce a quick-sort algorithm  for volume preservation and a time refinement technique to improve the accuracy of the contact point motion.   In Section 5 and Section 6, we present  numerical examples of wetting on rough surfaces to   demonstrate  the efficiency of the new method.}

\section{The minimization of  surface energies }
%
%
\begin{figure}
\begin{center}
\includegraphics[scale=0.45]{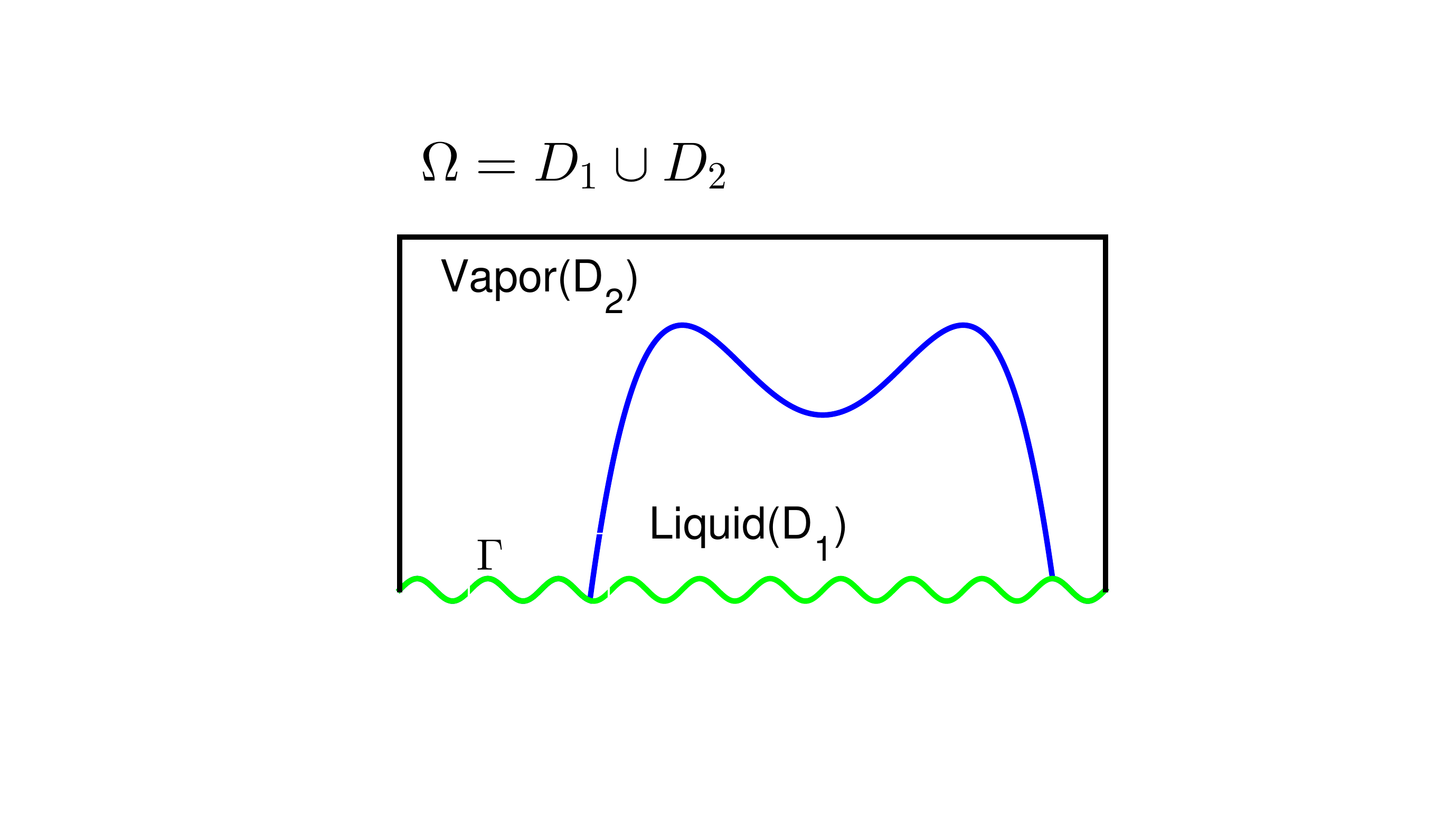}
\end{center}
\vspace{-1.5cm}
\caption{Wetting on a rough surface}
\label{fig:2phase1}
\end{figure}
We consider a wetting problem in a domain $\Omega\in \mathbb{R}^n$, $n=2,3$ (see Figure~\ref{fig:2phase1}).
The solid surface $\Gamma$ is part of the domain boundary $\partial\Omega$.
Denote the liquid domain by  $D_1\subset\Omega$. For simplicity, we assume that
$\partial D_1\cap \partial\Omega\subset\Gamma$.  The volume of the liquid drop is fixed such that
$|D_1|=V_0$.  We denote $\Sigma_{LV}=\partial D_1\cap \Omega$,
$\Sigma_{SL}=\partial D_1\cap\Gamma$ and $\Sigma_{SV}=\Gamma\setminus\partial D_1$  as the liquid-vapor,
 solid-liquid and  solid-vapor interfaces respectively.
Then, the equilibrium configuration of the system is obtained by  minimizing the total interface energy of the system as follows:
 \begin{align}\label{e:minE}
 \min_{|D_1|=V_0} \mathcal{E}(D_1)=\gamma_{LV}|\partial D_1\cap\Omega|
 +\int_{\partial D_1\cap\Gamma}\gamma_{SL}(x)\ds+\int_{\Gamma\setminus\partial D_1}\gamma_{SV}(x)\ds
 \end{align}
where the solid  boundary $\Gamma$ is rough and/or  chemically inhomogeneous (i.e.
$\gamma_{SL}(x)$ and $\gamma_{SV}(x)$ may depend on $x$). {To ensure  the problem is
well-posed, Young's angle must satisfy $0<\theta_Y<\pi$. By equation~\eqref{e:Young}, this leads
to the condition $-1<\frac{\gamma_{SV}-\gamma_{SL}}{\gamma_{LV}}<1$. Throughout the paper,
we will assume this condition holds.}


To solve  problem \eqref{e:minE} numerically,
it is convenient to use a diffuse interface model
to approximate the sharp interface energy. Suppose
$\varphi$ is a phase-field function, such that $D_1=\{\varphi<0\}$ represents
the liquid domain, $\{\varphi>0\}$ represents
the vapor domain and $\Sigma_{LV}=\{\varphi=0\}$ is the liquid-vapor interface.
 The total energy
\eqref{e:energy} can be  approximated by
\begin{equation}\label{e:ph_energy}
\mathcal{E}_\eps^{ph}(\varphi)=\int_{\Omega}\eps|\nabla\varphi|^2 +\frac{f(\varphi)}{\eps}\dx+\int_{\Gamma}\gamma(x,\varphi)\ds,
\end{equation}
where $\eps$ is a small parameter representing interface thickness, $f(\varphi)=\frac{(1-\varphi^2)^2}{4}$ is a double-well function and $$\gamma(\varphi)=\frac{\tilde{\gamma}_{SV}(x)+\tilde \gamma_{SL}(x)}{2}+\frac{\tilde \gamma_{SV}(x)-\tilde  \gamma_{SL}(x)}{4}(3\varphi-\varphi^3).$$
It can be proved that when $\eps$ goes to zero, after scaling, the energy in \eqref{e:ph_energy} converges to that in \eqref{e:energy} \cite{XuWang2011}. Therefore, problem \eqref{e:minE} can be approximated by minimizing
the total energy $\mathcal{E}_\eps^{ph}$ under the volume constraint $\int_{\Omega}(\varphi-1)/2\dx=V_0$.

The $H^{-1}$ gradient flow of the energy functional \eqref{e:ph_energy} will lead to a Cahn-Hilliard equation with contact angle boundary conditions \cite{chenWangXu2014}.  Alternatively, the $L^2$ gradient flow will lead to a modified Allen-Cahn equation:
\begin{align}\label{e:AC}
\left\{
\begin{array}{lr}
\varphi_t=\eps\Delta\varphi-\frac{f'(\varphi)}{\eps}+\delta & \hbox{ in }\Omega; \medskip\\
\frac{\partial\varphi}{\partial n}+{\gamma'(x,\varphi)}=0, & \hbox{ on } \Gamma; \medskip\\
\frac{\partial\varphi}{\partial n}=0, & \hbox{ on } \partial\Omega\setminus\Gamma,\medskip\\
\int_{\Omega}\frac{\varphi-1}{2}\dx=V_0.
\end{array}
\right.
\end{align}
Here $\delta$ is a Lagrangian multiplier for the volume constraint.

A MBO-type  threshold dynamics scheme can be derived easily using a splitting method for  \eqref{e:AC}.
Assume  we have a solution $\varphi^k$ (characteristic function of a region)  at the {$k$-th} time step. We can  first solve the heat equation
\begin{align}
\label{e:heat1}
\left\{\begin{array}{lr}
\bar{\varphi}_t= \eps\Delta\bar \varphi & \hbox{ in }\Omega.\medskip\\
\frac{\partial \bar\varphi}{\partial n}+\gamma'(x,\bar \varphi)=0, & \hbox{ on } \Gamma,\medskip\\
\frac{\partial\bar\varphi}{\partial n}=0,& \hbox{ on } \partial\Omega\setminus\Gamma,\medskip\\
\bar\varphi(x,0)=\bar\varphi^{k},
\end{array}
\right.
\end{align}
for some time $\delta t_1$ and then solve
\begin{align}\label{e:split2}
\left\{
\begin{array}{l}
\varphi_t=-\frac{f'(\varphi)}{\eps} \medskip \\
\varphi(x,0)=\bar{\varphi}(x,\delta t_1)
\end{array}
\right.
\end{align}
for some time $\delta t_2$ and set $\varphi^{k+1}=\varphi(x,\delta t_2)$.
It is easy to see that  when $\delta t_2/\eps$ is large enough, solving the second equation \eqref{e:split2} is
reduced to a thresholding step
\begin{equation}\label{e:threshold}
\varphi(x,\delta t_2)\approx
\left\{
\begin{array}{lr}
-1 & \hbox{ if } \varphi(x,0)<0;\\
1& \hbox{ if } \varphi(x,0)>0,
\end{array}
\right.
\end{equation}
which gives a characteristic function $\varphi^{k+1}$ at the $k+1$ time step.
 This leads to the following  MBO-type scheme for the wetting problem:

\vspace{0.5cm}
{{\bf A direct MBO threshold dynamics scheme for the wetting problem}
\it
\begin{itemize}
\item[]
\begin{description}
\item[Step 0.] Given an initial domain $D_1^0\subset\Omega$ such that  $|D_1^{0}|=V_0$.
Set a tolerance parameter $\eps>0$.
\item[Step 1.] For any $k$,  we first solve the heat equation
\begin{align}
\label{e:heat2}
\left\{\begin{array}{lr}
{\varphi}_t= \Delta  \varphi & \hbox{ in }\Omega, \medskip\\
\frac{\partial \phi}{\partial n}+\gamma'(x, \varphi)=0, & \hbox{ on } \Gamma, \medskip \\
\frac{\partial\varphi}{\partial n}=0, & \hbox{ on } \partial\Omega\setminus\Gamma,\medskip\\
\vspace{1mm}
\varphi(x,0)=\chi_{D_1^k},
\end{array}
\right.
\end{align}
for a  time step $\delta t$.
\item[Step 2.] Determine a new $D_1^{k+1}$ using thresholding
\begin{equation*}
D_1^{n+1}=\{x:\varphi(x,\delta t)<\frac12+\delta\}.
\end{equation*}
Here $\delta$ is chosen such that the volume $|D_1^{k+1}|=V_0$.
\item[Step 3.] If $|D_1^{k}-D_1^{k+1}|<\eps$, stop; otherwise, set $k=k+1$ and
go back to Step 1.
\end{description}
\end{itemize}
}
\vspace{0.5cm}
In the original MBO scheme, when the interface does not intersect the solid boundary,  one can solve the heat equation efficiently on a uniform grid using convolution of the heat kernel with the initial condition \cite{ruuth1998efficient,ruuth1998diffusion}.  The convolution can be evaluated using FFT  at $M\log(M)$ cost per time step where $M$ is the total number of grid points.   However, when the interface intersects the solid boundary, one must solve the heat equation with the wetting boundary condition as in \eqref{e:heat2}.  In this case, and in particular for  rough boundaries, the usual fast algorithms cannot be applied to solve \eqref{e:heat2}.  In the next section, we will introduce a new threshold dynamics method.


\section{A new threshold dynamics method for the wetting problem}
In this section, we  introduce a new threshold dynamics method  motivated by the recent work of Esedoglu and Otto \cite{esedoglu2013threshold}.
The main idea is to extend the fluid domain $\Omega$ to a larger domain containing
the solid phase. In the extended domain,  the interface energies between different phases in \eqref{e:minE}
can be approximated by a convolution of characteristic functions and a Guassian kernel (see details below). We then derive a simple scheme to  minimize the new energy functional with
the constraint that the solid phase does not change and the volume of
the liquid phase is preserved. The scheme leads to a new threshold dynamics method for solving the  wetting problem.

\subsection{The representation of interface energies in an extended domain}
In the following, we let $D_1,D_2\subset\Omega$ be  the liquid and  vapor phases, respectively.
Let $\Sigma_{LV}=\partial D_1\cap\partial D_2$ be the liquid-vapor interface.
When $\delta t \ll 1$, the area of $\Sigma_{LV}$ can be approximated by (see \cite{alberti1998non,miranda2007short})
\begin{equation}\label{e:3.1}
|\Sigma_{LV}|\approx \frac{1}{\sqrt{\delta t}}\int \chi_{D_1} G_{\delta t}*\chi_{D_2} \dx,
\end{equation}
where $\chi_{D_i}$ is the characteristic function of $D_i$ and
$$G_{\delta t}(\mathbf x)=\frac{1}{(4\pi \delta t)^{n/2}}\exp(-\frac{|\mathbf x|^2}{4\delta t})$$
is the Gaussian kernel.
\begin{figure}
\begin{center}
\includegraphics[scale=0.45]{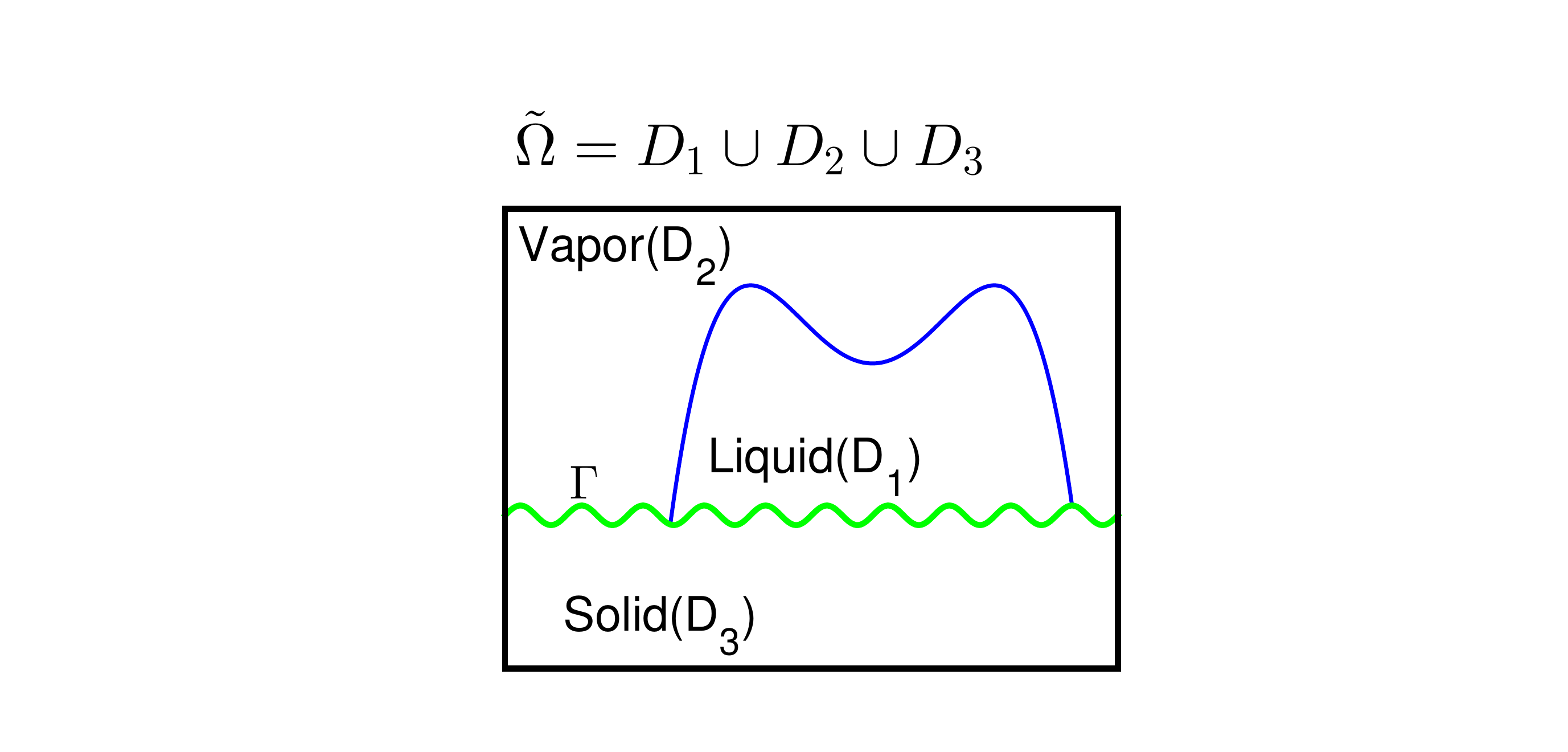}
\end{center}
\caption{ Extended computational domain  $\tilde{\Omega}=\Omega \cup D_3$}
\label{fig:phasefigure}
\end{figure}

In the total energy \eqref{e:minE}, the second and third terms are surface energies defined on the solid surface $\Gamma$. They are the solid-liquid interfacial energy term on $\Sigma_{SL}=\partial D_1\cap\Gamma$ and the solid-vapor
interfacial energy term on $\Sigma_{SV}=\partial D_2\cap\Gamma$. To  approximate the two terms using the Gaussian kernel,
 we extend the domain $\Omega$  beyond $\Gamma$ (see Figure~\ref{fig:phasefigure}).
The extended domain  is $\tilde{\Omega}=\Omega \cup D_3$ where $D_3$ is the solid region.
Then, the solid surface is $\Gamma=\partial \Omega\cap \partial D_3$,  the solid-liquid interface is
 $\Sigma_{SL}=\partial D_1\cap\partial D_3$
and the solid-vapor interface is $\Sigma_{SV}=\partial D_2\cap\partial D_3$.
Similar  to \eqref{e:3.1}, the total energy $\mathcal{E}$ in \eqref{e:minE} can be approximated by
\begin{align}\label{e:ApproEng}
&\mathcal{E}^{\delta t}(\chi_{D_1},\chi_{D_2})=\nonumber \\
&\frac{\gamma_{LV}}{\sqrt{\delta t}}\!\int_{\tilde{\Omega}}\!\chi_{D_1}G_{\delta t}*\chi_{D_2} \dx
+\frac{\gamma_{SL}}{\sqrt{\delta t}}\!\int_{\tilde{\Omega}}\!\chi_{D_1}G_{\delta t}*\chi_{D_3} \dx+
\frac{\gamma_{SV}}{\sqrt{\delta t}}\!\int_{\tilde{\Omega}}\!\chi_{D_2}G_{\delta t}*\chi_{D_3}\dx.
\end{align}
For simplicity, we assume $\gamma_{SL}$ and $\gamma_{SV}$ are constants throughout
this section. The  analysis and the algorithms can be easily generalized to  cases
where they are not homogeneous.  In  section 5, we will apply the method
to a chemically patterned surface where $\gamma_{SL}$ and $\gamma_{SV}$ are
piecewise constant functions.

Denote $u_1=\chi_{D_1}$ and $u_2=\chi_{D_2}$. We define an admissible set
\begin{align}\label{e:admissible}
\mathcal{B}=\{(u_1,u_2)\in BV(\Omega)\ |\ &u_i(x)=0, 1, \hbox { and } u_1(x)+u_2(x)=1,\ a.e.\ x\in\Omega, \nonumber \\
 &\int_{\Omega} u_1\dx=V_0  \}
\end{align}
The wetting problem \eqref{e:minE} can be approximated by
\begin{equation}\label{e:minEt}
\min_{(u_1,u_2)\in\mathcal{B}} \mathcal{E}^{\delta t}(u_1,u_2).
\end{equation}
This is a nonconvex minimization problem since $\mathcal{B}$ is not a convex set.
The $\Gamma$-convergence of  problem \eqref{e:minEt} to \eqref{e:minE} can be proved in a similar way as in \cite{esedoglu2013threshold}.
\subsection{Derivation of the threshold dynamics method}\label{sec:deriva}
We will derive the threshold dynamics method for  problem \eqref{e:minEt}. {Notice that the  problem is to minimize a concave energy functional defined on a nonconvex admissible set. We first show that it can be
relaxed to a problem defined on a convex admissible set}. Then we derive a threshold dynamics method for the equivalent problem.  The relaxed problem is given by
\begin{equation}\label{e:relaxedPb}
\min_{(u_1,u_2)\in\mathcal{K}} \mathcal{E}^{\delta t}(u_1,u_2).
\end{equation}
where $\mathcal{K}$
is  the convex hull of the admissible set $\mathcal{B}$:
\begin{equation}\label{e:convexhull}
\mathcal{K}=\{(u_1,u_2)\in BV(\Omega) | 0\leq u_i\leq 1, u_1(x)+u_2(x)=1,\ a.e.\ x\in\Omega, \int_{\Omega} u_1\dx =V_0\}.
\end{equation}
The following lemma shows that the relaxed problem \eqref{e:relaxedPb} is equivalent to the original problem \eqref{e:minEt}.
For convenience later, we prove the result for a  slightly more general problem with an extra linear functional term $\mathcal{L}(u_1, u_2)$.
\begin{lemma}\label{lem:relax}
For any given $\alpha,\beta \geq 0$ and any linear functional $\mathcal{L}(u_1, u_2)$, we have
$$
\min_{(u_1,u_2)\in\mathcal{K}} (\alpha \mathcal{E}^{\delta t}(u_1, u_2) +\beta \mathcal{L}(u_1,u_2))=\min_{(u_1,u_2)\in\mathcal{B}} (\alpha \mathcal{E}^{\delta t}(u_1, u_2) +\beta \mathcal{L}(u_1,u_2)).$$
\end{lemma}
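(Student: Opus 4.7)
The plan is to exploit concavity of $\mathcal{E}^{\delta t}$ under the constraint $u_1+u_2=1$ and then invoke a Krein--Milman/Bauer-type argument to locate a minimizer at an extreme point of the convex admissible set $\mathcal{K}$. One direction is immediate: since $\mathcal{B}\subset \mathcal{K}$, we automatically have $\min_{\mathcal{K}}\le \min_{\mathcal{B}}$ for any objective, so the whole task is to exhibit, given a minimizer over $\mathcal{K}$, an element of $\mathcal{B}$ with no greater energy.

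The first step is to use the pointwise constraint $u_2=1-u_1$ to eliminate $u_2$ and rewrite the objective as a functional $F(u_1):=\alpha \mathcal{E}^{\delta t}(u_1,1-u_1)+\beta\mathcal{L}(u_1,1-u_1)$ on the convex set $\mathcal{K}_1:=\{u_1\in BV(\Omega):0\le u_1\le 1,\ \int_{\Omega}u_1\,\dx=V_0\}$. Expanding the three quadratic/bilinear terms in $\mathcal{E}^{\delta t}$ and recalling that $u_3:=\chi_{D_3}$ is fixed, each cross term $\int u_i\,G_{\delta t}*u_j\,\dx$ becomes either linear in $u_1$ (when one of $i,j$ is $3$, or after using $u_2=1-u_1$ on the linear pieces) or of the form $-\int u_1\,G_{\delta t}*u_1\,\dx$. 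The key observation is that the Gaussian kernel has a strictly positive Fourier transform, so by Plancherel $\int u\,G_{\delta t}*u\,\dx=\int |\hat u|^2\hat G_{\delta t}\,d\xi\ge 0$; hence $-\int u_1\,G_{\delta t}*u_1\,\dx$ is concave in $u_1$. Together with the linear remainder and the linear functional $\beta\mathcal{L}$, this shows that $F$ is a concave (continuous) functional on $\mathcal{K}_1$.

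The second step is structural: $\mathcal{K}_1$ is a convex, weakly-$\ast$ compact subset of $L^\infty(\Omega)$, and by a standard bathtub-type characterization its extreme points are exactly the characteristic functions $\chi_{D_1}$ of measurable sets $D_1\subset\Omega$ with $|D_1|=V_0$. Since a concave, upper semicontinuous functional on a convex compact set attains its minimum at an extreme point (Bauer's minimum principle), there exists $u_1^\ast=\chi_{D_1^\ast}\in\mathcal{K}_1$ with $F(u_1^\ast)=\min_{\mathcal{K}_1}F$. Setting $u_2^\ast=1-u_1^\ast=\chi_{\Omega\setminus D_1^\ast}$ gives a pair in $\mathcal{B}$ that realizes the infimum, which combined with the trivial inequality yields the desired equality.

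The only genuine subtlety is the extreme-point characterization under the volume constraint $\int u_1\,\dx=V_0$: one has to rule out extreme points that take intermediate values on a set of positive measure, which is done by the usual trick of perturbing $u_1\mapsto u_1\pm\eps(\chi_A-\chi_B)$ on two small level-subsets of equal measure inside $\{0<u_1<1\}$. Everything else (concavity of the quadratic Gaussian form, continuity of $F$, compactness of $\mathcal{K}_1$) is routine, and the argument is robust enough to include the extra linear term $\beta\mathcal{L}(u_1,u_2)$, which is exactly the generality needed later for volume-constrained and time-refined variants.
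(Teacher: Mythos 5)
Your argument is correct, but it is organized differently from the paper's. The paper works locally: it takes a minimizer $(\tilde u_1,\tilde u_2)$ over $\mathcal{K}$, supposes it is not in $\mathcal{B}$, and applies precisely your perturbation $\tilde u_1 \pm t(\chi_{A_1}-\chi_{A_2})$ on a set where $C_0<\tilde u_1,\tilde u_2<1-C_0$, computing that the second derivative along this segment is $-\tfrac{1}{\sqrt{\delta t}}\int_{\tilde\Omega}(\chi_{A_1}-\chi_{A_2})\,G_{\delta t}*(\chi_{A_1}-\chi_{A_2})\,\dx<0$, so the point cannot be a minimizer; the case $\alpha=0$ is dispatched by the separate (and rather loose) remark that a linear functional is minimized on the boundary of a convex set. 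You instead make the concavity global -- eliminating $u_2=\chi_\Omega-u_1$ and isolating the single concave term proportional to $-\int u_1\,G_{\delta t}*u_1\,\dx$ -- characterize the extreme points of the volume-constrained box, and invoke Bauer's principle. The ingredients (the two-set perturbation and positive-definiteness of the Gaussian quadratic form) coincide, but your packaging buys a uniform treatment of $\alpha=0$ (boundary points of a convex set need not be extreme, and a constant linear functional attains its minimum everywhere, so the paper's remark is weaker than what is needed) and an existence statement via weak-$*$ compactness that the paper simply assumes; conversely, the paper's local argument shows, for $\alpha>0$, that \emph{every} minimizer over $\mathcal{K}$ is already a characteristic function, whereas yours produces \emph{some} minimizer in $\mathcal{B}$ (either suffices for the stated equality of minima). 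Two points you should tighten: for the minimum form of Bauer's principle you want $F$ lower (not upper) semicontinuous, which is harmless here since $F$ is in fact weak-$*$ continuous by the smoothing of $G_{\delta t}$; and $\mathcal{K}$ as defined sits inside $BV(\Omega)$, which is not weak-$*$ compact without a total-variation bound, so your compactness step really takes place on the $L^\infty$ relaxation -- a gap the paper shares, since it too assumes a minimizer over $\mathcal{K}$ exists.
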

\begin{proof}Let  $(\tilde{u}_1,\tilde{u}_2)\in\mathcal{K}$ be a minimizer of the functional
$$\alpha \mathcal{E}^{\delta t}(u_1, u_2) +\beta \mathcal{L}(u_1,u_2).$$
Since $\mathcal{B}\subset\mathcal{K}$, we have
\begin{align*}
\alpha \mathcal{E}^{\delta t}(\tilde u_1, \tilde u_2) +\beta \mathcal{L}(\tilde u_1,\tilde u_2)&=\min_{(u_1,u_2)\in\mathcal{K}}\alpha \mathcal{E}^{\delta t}(u_1, u_2) +\beta \mathcal{L}(u_1,u_2)\\
&\leq \min_{(u_1,u_2)\in\mathcal{B}}\alpha \mathcal{E}^{\delta t}(u_1, u_2) +\beta \mathcal{L}(u_1,u_2).
\end{align*}
Therefore, we need only to prove  that $(\tilde{u}_1,\tilde{u}_2)\in\mathcal{B}$.

 The proof  is trivial when $\alpha=0$, since
the minimizer of a linear functional in a convex set must belong to the boundary of the set.  When $\alpha>0$,
 we prove  by contradiction.  If $(\tilde{u}_1,\tilde{u}_2)\not\in\mathcal{B}$, there is a set $A\in\Omega$
 and a constant $0<C_0<\frac12$, such that $|A|>0$ and
$$
0<C_0<\tilde u_1(x),\tilde u_2(x)<1-C_0, \ \ \ \hbox{for all } x\in A.
$$
We divide $A$ into two sets $A=A_1\cup A_2$ such that $A_1 \cap A_2=\emptyset$ and $|A_1|=|A_2|=|A|/2$.
Denote $u_1^t=\tilde{u}_1+t \chi_{A_1}-t\chi_{A_2}$ and
$u_2^t=\tilde{u}_2-t \chi_{A_1}+t\chi_{A_2}$.
When $0<t<C_0$, we have $0<u_1^t,u_2^t<1$ and
\begin{equation*}
u_1^t+u_2^t= \tilde{u}_1+\tilde{u}_2=1,\ \hbox{and } \int_{\Omega} u_1^t\dx=\int_{\Omega}\tilde{u}_1\dx=V_0.
\end{equation*}
This implies that $(u_1^t,u_2^t)\in \mathcal{K}$.
Furthermore, direct computations give,
\begin{align*}
\frac{d^2}{dt^2}(\alpha\mathcal{E}^{\delta t}(u_1^t,u_2^t)+\beta \mathcal{L}(u_1^t,u_2^t))
&=\frac{1}{\sqrt{\delta t}}\int_{\tilde{\Omega}}\frac{d}{dt} u_1^t G_{\delta t}*\frac{d}{dt} u^t_2\dx\\
&=\frac{1}{\sqrt{\delta t}}\int_{\tilde{\Omega}}(\chi_{A_1}-\chi_{A_2}) G_{\delta t}*(\chi_{A_2}-\chi_{A_1})\dx\\
&=-\frac{1}{\sqrt{\delta t}}\int_{\tilde{\Omega}}(\chi_{A_1}-\chi_{A_2}) G_{\delta t}*(\chi_{A_1}-\chi_{A_2})\dx\\
&<0.
\end{align*}
The functional is concave on the point $(\tilde{u}_1,\tilde{u}_2)$.
Thus, $(\tilde{u}_1,\tilde{u}_2)$ cannot be a minimizer  of the functional. This contradicts the assumption.
\end{proof}

The above lemma implies that we can solve the relaxed problem \eqref{e:relaxedPb} instead of the original one \eqref{e:minEt}.
In the following, we show that the problem can be solved iteratively using a thresholding method.

Suppose we solve problem \eqref{e:relaxedPb} using an iterative method. In the $k^{th}$ step,
we have an approximated solution $(u_1^k,u_2^k)$. The energy functional  $\mathcal{E}^{\delta t}(u_1,u_2)$ can be linearized near
the point $(u_1^k,u_2^k)$ as follows:
\begin{align*}
\mathcal{E}^{\delta t}(u_1,u_2)\approx \mathcal{E}^{\delta t}(u_1^k,u_2^k)+\hat {\mathcal{L}}(u_1-u_1^k,u_2-u_2^k,u_1^k,u_2^k)+ h.o.t.
\end{align*}
with
\begin{align}
\hat  {\mathcal{L}}&(u_1,u_2, u_1^k,u_2^k)=\nonumber\\
&\frac{1}{\sqrt{\delta t}}\left(\int_{\tilde{\Omega}}u_1G_{\delta t}*(\gamma_{LV}u_2^k+\gamma_{SL}\chi_{D_3})\dx
+
\int_{\tilde{\Omega}}u_2G_{\delta t}*(\gamma_{LV}u_1^k+\gamma_{SV}\chi_{D_3})\dx\right).
\label{e:linearization}
\end{align}
Then we minimize the linearized functional
\begin{equation}\label{e:minL}
\min_{(u_1,u_2)\in\mathcal{K}} \hat{\mathcal{L}}(u_1,u_2,u_1^k,u_2^k)
\end{equation}
 and set the solution to  $(u_1^{k+1},u_2^{k+1})$. By Lemma~\ref{lem:relax},
 the solution to \eqref{e:minL} is in $\mathcal{B}$. In other words, $u_1^{k+1}$ and $u_2^{k+1}$
 are characteristic functions of some proper sets $D_1^{k+1}$ and $D_2^{k+1}$ such that $|D_1^{k+1}|=V_0$.

The following lemma shows that the minimizing problem \eqref{e:minL} is solved via
a simple thresholding approach.
\begin{lemma}\label{lem:linearization}
Denote
\begin{equation}\label{e:phi12}
\phi_1=\frac{1}{\sqrt{\delta t}} G_{\delta t}*(\gamma_{LV} u_2^k+\gamma_{SL}\chi_{D_3}),\
\phi_2=\frac{1}{\sqrt{\delta t}} G_{\delta t}*(\gamma_{LV} u_1^k+\gamma_{SV}\chi_{D_3}).
\end{equation}
Let
\begin{equation}\label{e:D1}
D_1^{k+1}=\{ x\in\Omega | \;\phi_1 < \phi_2+\delta \}
\end{equation}
for some $\delta$ such that $|D_1^{k+1}|=V_0$.  Define $D_2^{k+1}=\Omega\setminus D_1^{k+1}$.
Then $(u_1^{k+1},u_2^{k+1})=(\chi_{D_1^{k+1}},\chi_{D_2^{k+1}})$ is a solution to \eqref{e:minL}.
\end{lemma}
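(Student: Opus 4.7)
The plan is to reduce the minimization of the linearized functional to the classical ``bathtub principle.'' Since any $(u_1,u_2)\in\mathcal{K}$ satisfies $u_1+u_2=1$ a.e.\ on $\Omega$ and $u_i$ vanishes outside $\Omega$, I would substitute $u_2=1-u_1$ into \eqref{e:linearization} and restrict the integrals to $\Omega$ to obtain
\begin{equation*}
\hat{\mathcal{L}}(u_1,1-u_1,u_1^k,u_2^k)=\int_{\Omega}u_1(\phi_1-\phi_2)\dx+\int_{\Omega}\phi_2\dx,
\end{equation*}
where the second term is a constant independent of $u_1$. Thus problem \eqref{e:minL} collapses to the single-variable linear program
\begin{equation*}
\min_{u_1}\int_{\Omega}u_1(\phi_1-\phi_2)\dx\quad\hbox{subject to}\quad 0\leq u_1\leq 1,\ \int_{\Omega}u_1\dx=V_0.
\end{equation*}

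Next I would invoke Lemma~\ref{lem:relax} (with $\alpha=0$, $\beta=1$, $\mathcal{L}=\hat{\mathcal{L}}$) to conclude that a minimizer over $\mathcal{K}$ of this linear functional lies in $\mathcal{B}$, so it is the characteristic function of a set $D_1^{k+1}$ with $|D_1^{k+1}|=V_0$. To identify the optimal set, set $g:=\phi_1-\phi_2$ and choose the Lagrange multiplier $\delta$ so that $|\{g<\delta\}|=V_0$. For any competitor $u_1\in\mathcal{K}$ and $u_1^\star=\chi_{\{g<\delta\}}$, a pointwise check shows $(u_1(x)-u_1^\star(x))(g(x)-\delta)\geq 0$ on $\Omega$: on $\{g<\delta\}$ one has $u_1^\star=1\geq u_1$ and $g-\delta<0$, while on $\{g\geq\delta\}$ one has $u_1^\star=0\leq u_1$ and $g-\delta\geq 0$. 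Integrating and using the volume identity $\int_{\Omega}(u_1-u_1^\star)\dx=0$ yields $\int_{\Omega}u_1 g\,\dx\geq\int_{\Omega}u_1^\star g\,\dx$, which is exactly the optimality claim.

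The main obstacle is the existence of a threshold $\delta$ for which the volume constraint holds exactly. If the distribution function of $g$ is continuous at the relevant quantile, $\delta$ is unique; otherwise $g$ has a plateau $\{g=\delta_0\}$ of positive measure and one must distribute the deficit mass across a measurable subset of this plateau of the right size. Since the integrand of $\int u_1 g\,\dx$ is constant on $\{g=\delta_0\}$, any such choice yields the same value of the functional, so optimality survives (though with non-uniqueness of the minimizer). This measure-theoretic adjustment is the only mildly delicate point; the remainder is a routine rearrangement argument.
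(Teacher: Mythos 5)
Your proof is correct, and it reaches the same conclusion by a slightly different organization than the paper. The paper first invokes Lemma~\ref{lem:relax} to reduce the competition to pairs $(u_1,u_2)=(\chi_{\hat D_1},\chi_{\hat D_2})\in\mathcal{B}$, and then runs a set-exchange argument: it introduces $A_1=\hat D_1\setminus D_1^{k+1}$ and $A_2=D_1^{k+1}\setminus\hat D_1$, notes $|A_1|=|A_2|$ from the volume constraint, and uses the defining inequalities $\phi_1\geq\phi_2+\delta$ on $A_1\subset D_2^{k+1}$ and $\phi_1<\phi_2+\delta$ on $A_2\subset D_1^{k+1}$ to show the difference of the functional values is $\leq 0$. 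You instead eliminate $u_2=1-u_1$, collapse $\hat{\mathcal{L}}$ to the linear program $\min\int_\Omega u_1(\phi_1-\phi_2)\dx$ over $\mathcal{K}_1$, and apply the bathtub principle via the pointwise inequality $(u_1-u_1^\star)(g-\delta)\geq 0$. These are the same rearrangement idea in two guises --- the paper's $A_1/A_2$ computation is exactly the integrated form of your pointwise inequality restricted to characteristic functions --- but your version buys something: the pointwise argument handles \emph{all} competitors $0\leq u_1\leq 1$ in the convex hull directly, so your appeal to Lemma~\ref{lem:relax} is actually redundant (and in fact your argument gives an independent proof that the relaxed problem \eqref{e:minL} is attained at a characteristic function). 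Your closing remark on the existence of the threshold $\delta$ (the plateau $\{g=\delta_0\}$ of positive measure) addresses a point that the lemma quietly assumes away by hypothesizing ``for some $\delta$ such that $|D_1^{k+1}|=V_0$''; the paper's proof does not discuss it either, so flagging it is a reasonable bonus rather than a gap.
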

\begin{proof}
By Lemma~\ref{lem:relax}, we need only to prove
\begin{align}
\hat{\mathcal{L}}(u_1^{k+1},u_2^{k+1},u_1^k,u_2^k)\leq \hat{\mathcal{L}}(u_1,u_2,u_1^k,u_2^k),
\end{align}
for all $(u_1,u_2)\in\mathcal{B}$.

For each $(u_1,u_2)\in\mathcal{B}$, we know $u_1=\chi_{\hat{D}_1}$ and $u_2=\chi_{\hat{D}_2}$ for some open sets $\hat{D}_1$,
$\hat{D}_2$ in $\Omega$, such that $\hat D_1\cap \hat D_2=\emptyset$, $\hat D_1\cup \hat D_2=\Omega$ and $|\hat D_1|=V_0$.
Let $A_1=\hat D_1\setminus  D_1^{k+1}=D_2^{k+1}\setminus \hat D_2$ and $A_2=\hat D_2 \setminus D_2^{k+1}=D_1^{k+1}\setminus \hat D_1$.
We must have $|A_1|=|A_2|$ due to the volume conservation property. Since $A_1\subset D_2^{k+1}$, we have
$$\phi_1(x)\geq \phi_2(x)+\delta,\ \ \ \ \forall x\in A_1.$$
Similarly, since $A_2\in D_1^{k+1}$, we have
$$\phi_1(x)< \phi_2(x)+\delta,\ \ \ \ \forall x\in A_2.$$
Therefore, we have
\begin{align*}
&\hat{\mathcal{L}}(u_1^{k+1},u_2^{k+1},u_1^k,u_2^k)-\hat{\mathcal{L}}(u_1,u_2,u_1^k,u_2^k)\nonumber\\
=&\int_{\tilde{\Omega}} (u_1^{k+1}-u_1)\phi_1+(u_2^{k+1}-u_2)\phi_2\dx \\
=&-\int_{A_1}\phi_1\dx+\int_{A_2}\phi_1\dx -\int_{A_2}\phi_2\dx+\int_{A_1}\phi_2\dx\\
=&\int_{A_1}(\phi_2-\phi_1)\dx+\int_{A_2}(\phi_1-\phi_2)\dx\\
\leq &-\delta \int_{A_1}\dx+\delta \int_{A_2}\dx =0.
\end{align*}

\end{proof}

We are led to the following threshold dynamics algorithm:

\vspace{0.5cm}

{{\bf Algorithm I: }
\it
\begin{itemize}\item[]
\begin{description}
\item[Step 0.] Given initial $D_1^0, D_2^0\subset\Omega$, such that $D_1^0\cap D_2^0=\emptyset$, $D_1^0\cup D_2^0=\Omega$ and
$|D_1^0|=V_0$. Set a tolerance parameter $\eps>0$.
\item[Step 1.]For given sets $(D_1^k, D_2^k)$, we define two functions
\begin{equation}\label{e:newphi12}
\phi_1=\frac{1}{\sqrt{\delta t}} G_{\delta t}*(\gamma_{LV} \chi_{D_2^k}+\gamma_{SL}\chi_{D_3}),\
\phi_2=\frac{1}{\sqrt{\delta t}} G_{\delta t}*(\gamma_{LV} \chi_{D_1^k}+\gamma_{SV}\chi_{D_3}).
\end{equation}
\item[Step 2.] Find a $\delta$ such that the set
\begin{equation}\label{e:threshold1}
\tilde{D}_1^{\delta}=\{ x\in\Omega | \phi_1 < \phi_2+\delta. \}
\end{equation}
satisfies $|\tilde{D}_1^{\delta}|=V_0$.
Denote $D_1^{k+1}=\tilde{D}_1^{\delta}$ and $D_2^{k+1}=\Omega\setminus D_1^{k+1}$.
\item[Step 3.] If $|D_1^{k}-D_1^{k+1}|\leq \eps$, stop; otherwise,
go back  to Step 1.
\end{description}
\end{itemize}
}
\vspace{0.5cm}

\begin{remark}
 The method is  simple and easy to implement.

(1) We can always extend $\Omega$ to a cubic domain $\tilde{\Omega}$, since
the only constraints on the extension are $D_1\in\tilde{\Omega}$ and $|D_1|=V_0$. For
the cube domain, the convolution in \eqref{e:newphi12} can be computed by fast
algorithms (e.g. the FFT).

(2) To keep the volume of the
liquid phase unchanged, we need to find a proper $\delta$ in Step 2.
This can be done by using an iterative method (such as bisection method), as shown in \cite{ruuth2003simple} for mean curvature flow. In the next section, we will give a simpler and more efficient technique to
determine $\delta$.

(3) The above derivation of the thresholding method for the wetting problem can
be easily generalized to a multiphase system with wetting boundary conditions, e.g. the three-phase system \cite{shi2014modeling}, in the same spirit of  Esedoglu and Otto \cite{esedoglu2013threshold}.
\end{remark}

\subsection{A simplified algorithm  for the two-phase problem}
For the two-phase problem, Algorithm I can be simplified as follows.
Noticing that $u_1+u_2=1$ in $\Omega$, we actually have only one unknown $u_1$
in \eqref{e:relaxedPb}. Define
$$
\mathcal{K}_1=\{u\in BV(\Omega) | 0\leq u\leq 1,\ a.e.\ x\in\Omega, \int_{\Omega} u \dx =V_0\}.
$$
 It is easy to see that \eqref{e:relaxedPb} can be rewritten as
\begin{align}
\min_{u_1\in\mathcal{K}_1}\tilde{\mathcal{E}}^{\delta t}(u_1)=&-\gamma_{LV}\int_{\tilde{\Omega}} u_1 G_{\delta t}* u_1\dx
+\gamma_{LV}\int_{\tilde\Omega}u_1G_{\delta}*\chi_{\Omega}\dx\nonumber\\
&+\int_{\tilde \Omega} (\gamma_{SL}-\gamma_{SV}) u_1 G_{\delta t}*\chi_{D_3} \dx
+\int_{\tilde{\Omega}}\gamma_{SV}\chi_{\Omega}G_{\delta t}*\chi_{D_3}\dx.
\end{align}
Suppose we solve the problem using an iterative method.
For any given $u_1^k$, we could linearize the functional as
\begin{align*}
\tilde{\mathcal{E}}^{\delta t}(u_1)=\tilde{\mathcal{E}}^{\delta t}(u_1^k)+\tilde{\mathcal{L}}(u-u_1^k,u_1^k) + h.o.t.
\end{align*}
with
\begin{align}
\tilde{\mathcal{L}}(u,u_1^k)=&-2 \gamma_{LV}\int_{\tilde \Omega} u_1 G_{\delta t}*u_1^k \dx
+\gamma_{LV}\int_{\tilde\Omega}u_1G_{\delta}*\chi_{\Omega}\dx\nonumber\\
&+\int_{\tilde \Omega}(\gamma_{SL}-\gamma_{SV}) u_1 G_{\delta t}*\chi_{D_3} \dx\nonumber\\
=& \gamma_{LV}\int_{\tilde \Omega} u_1 G_{\delta t}*(u_2^k-u_1^k)\dx+\int_{\tilde \Omega}(\gamma_{SL}-\gamma_{SV}) u_1 G_{\delta t}*\chi_{D_3} \dx \nonumber\\
=&\gamma_{LV}\int_{\tilde \Omega} u_1G_{\delta t}*(u_2^k-u_1^k-\cos\theta_Y\chi_{D_3})\dx. \label{e:linear_New}
\end{align}
Here we use Young's equation $\gamma_{LV}\cos\theta_Y=\gamma_{SV}-\gamma_{SL}$.

As in the previous subsection, for the linearized functional \eqref{e:linear_New}, we can prove the  following result. The proof is  similar to that  for Lemma~\eqref{lem:linearization}.
\begin{lemma}\label{lem:linearization1}
Suppose $u_1^k=\chi_{D_1^{k}}$ for some sets $D_1^k\subset\Omega$ and $D_2^{k}=\Omega\setminus D_1^{k}$. Denote
\begin{equation*}\label{e:phi_New0}
\phi=\frac{\gamma_{LV}}{\sqrt{\delta t}} G_{\delta t}*( \chi_{D_2^k} -\chi_{D_1^k} -\cos(\theta_Y)\chi_{D_3}),
\end{equation*}
Let
$
\tilde{D}_1^{\delta}=\{ x\in\Omega\ |\ \phi< \delta \},
$
with some $\delta$ such that $|D_1^{k+1}|=V_0$.
Then $u_1^{k+1}=\chi_{D_1^{k+1}}$ is a minimizer of $\tilde{\mathcal{L}}(u,u_1^k)$ in $\mathcal{K}_1$.
\end{lemma}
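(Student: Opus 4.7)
The plan is to mimic the proof of Lemma~\ref{lem:linearization} in this one-unknown setting. First I would observe that $\tilde{\mathcal{L}}(\cdot,u_1^k)$ is a \emph{linear} functional of $u_1$ and that $\mathcal{K}_1$ is in affine bijection with $\mathcal{K}$ via the map $u_1\mapsto(u_1,\chi_\Omega-u_1)$. Consequently the $\alpha=0$ case of Lemma~\ref{lem:relax} transfers directly and tells us that the minimum of $\tilde{\mathcal{L}}(\cdot,u_1^k)$ over $\mathcal{K}_1$ is attained at an extreme point, i.e.\ at the characteristic function of some admissible set of volume $V_0$. So it suffices to establish
\[
\tilde{\mathcal{L}}(\chi_{D_1^{k+1}},u_1^k)\le \tilde{\mathcal{L}}(\chi_{\hat D_1},u_1^k)
\qquad\text{for every } \hat D_1\subset\Omega \text{ with } |\hat D_1|=V_0.
\]

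Next I would run the standard symmetric-difference comparison. Writing $A_1=\hat D_1\setminus D_1^{k+1}$ and $A_2=D_1^{k+1}\setminus\hat D_1$, we have $\chi_{D_1^{k+1}}-\chi_{\hat D_1}=\chi_{A_2}-\chi_{A_1}$ and $|A_1|=|A_2|$ by the common volume constraint. By the definition of $D_1^{k+1}$ as the sublevel set $\{\phi<\delta\}$, the function $\phi$ satisfies $\phi\ge\delta$ on $A_1\subset\Omega\setminus D_1^{k+1}$ and $\phi<\delta$ on $A_2\subset D_1^{k+1}$.

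Finally, using the identity $\gamma_{LV} G_{\delta t}*(\chi_{D_2^k}-\chi_{D_1^k}-\cos\theta_Y\chi_{D_3})=\sqrt{\delta t}\,\phi$ to rewrite \eqref{e:linear_New}, the difference $\tilde{\mathcal{L}}(\chi_{D_1^{k+1}},u_1^k)-\tilde{\mathcal{L}}(\chi_{\hat D_1},u_1^k)$ becomes, up to the positive factor $\sqrt{\delta t}$, exactly $\int_{A_2}\phi\,\dx-\int_{A_1}\phi\,\dx$, and the pointwise inequalities above force this to be $\le\delta(|A_2|-|A_1|)=0$. The only mildly delicate point is the initial reduction to characteristic functions, since Lemma~\ref{lem:relax} is literally stated for the two-component set $\mathcal{K}$ rather than for $\mathcal{K}_1$; the affine bijection above makes this transfer immediate, so no real obstacle arises and the lemma follows by the same sign-based accounting used in Lemma~\ref{lem:linearization}.
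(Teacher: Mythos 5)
Your proposal is correct and follows essentially the same route as the paper, which proves Lemma~\ref{lem:linearization1} by simply invoking the argument of Lemma~\ref{lem:linearization}: reduce to characteristic functions via Lemma~\ref{lem:relax} (the $\alpha=0$ case, transferred through the identification $u_2=1-u_1$), then compare against an arbitrary competitor using the symmetric-difference sets $A_1$, $A_2$ with $|A_1|=|A_2|$ and the sign of $\phi-\delta$ on each. Your handling of the transfer from $\mathcal{K}$ to $\mathcal{K}_1$ is the only point the paper leaves implicit, and you resolve it correctly.
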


This leads to
the following  algorithm.
\vspace{0.5cm}

{ {\bf   Algorithm II:}
\it
\begin{itemize}\item[]
\begin{description}
\item[Step 0.] Given initial $D_1^0\subset\Omega$, such that $|D_1^0|=V_0$. Set a tolerance parameter $\eps>0$.
\item[Step 1.]For given set $D_1^k$, set $D_2^k=\Omega\setminus D_1^k$, define a function
\begin{equation}\label{e:phi_New}
\phi=\frac{\gamma_{LV}}{\sqrt{\delta t}} G_{\delta t}*( \chi_{D_2^k} -\chi_{D_1^k} -\cos(\theta_Y)\chi_{D_3}).
\end{equation}
\item[Step 2.] Find a  $\delta\in(-1,1)$, so that the set
\begin{equation}\label{e:threshold2}
\tilde{D}_1^{\delta}=\{ x\in\Omega\ |\ \phi< \delta. \}
\end{equation}
satisfying $|\tilde{D}_1^{\delta}|=V_0$.
Denote $D_1^{k+1}=\tilde{D}_1^{\delta}$.
\item[Step 3.] If $|D_1^{k}-D_1^{k+1}|\leq \eps$, stop;  otherwise,
go back  to Step 1.
\end{description}
\end{itemize}
}
\vspace{0.5cm}

The following proposition shows that Algorithm I and Algorithm II are equivalent.
\begin{prop}\label{prop:3.1}
For any domain $(D_1^k, D_2^k)\in \mathcal{B}$, after one iteration,
\textit{Algorithm I} and \textit{Algorithm II} generate the same $(D_1^{k+1}, D_2^{k+1})$.
\end{prop}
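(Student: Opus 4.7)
The plan is to show that the two thresholding conditions are pointwise identical, after invoking Young's equation, so that the volume-preservation constraint picks out the same level set in each algorithm.

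First I would compute the difference $\phi_1-\phi_2$ appearing in the threshold condition \eqref{e:threshold1} of Algorithm I. Using linearity of the convolution,
\begin{equation*}
\phi_1-\phi_2=\frac{1}{\sqrt{\delta t}}G_{\delta t}*\bigl(\gamma_{LV}(\chi_{D_2^k}-\chi_{D_1^k})+(\gamma_{SL}-\gamma_{SV})\chi_{D_3}\bigr).
\end{equation*}
Next I would substitute Young's equation \eqref{e:Young}, which gives $\gamma_{SL}-\gamma_{SV}=-\gamma_{LV}\cos\theta_Y$. Factoring out $\gamma_{LV}$ yields
\begin{equation*}
\phi_1-\phi_2=\frac{\gamma_{LV}}{\sqrt{\delta t}}G_{\delta t}*\bigl(\chi_{D_2^k}-\chi_{D_1^k}-\cos\theta_Y\,\chi_{D_3}\bigr)=\phi,
\end{equation*}
where $\phi$ is the function defined in Algorithm II via \eqref{e:phi_New}. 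Hence the thresholding set \eqref{e:threshold1} in Algorithm I rewrites as $\{x\in\Omega:\phi<\delta\}$, which is exactly the form of \eqref{e:threshold2} in Algorithm II.

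Finally I would argue that the constant $\delta$ produced in Step 2 of each algorithm coincides. In both cases $\delta$ is characterized by the volume constraint $|\{x\in\Omega:\phi<\delta\}|=V_0$, and this characterization depends only on the function $\phi$ just shown to be common to the two schemes. Assuming the super-level/sub-level volume function $\delta\mapsto|\{\phi<\delta\}|$ is monotone (and modulo the standard convention for resolving ties on level sets, e.g. as already implicitly used in Step 2 of Algorithm I), the two values of $\delta$ agree, and therefore $D_1^{k+1}$ and $D_2^{k+1}=\Omega\setminus D_1^{k+1}$ produced by the two algorithms coincide.

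The only subtle point is the handling of the set $\{\phi=\delta\}$ when it has positive measure, which could a priori allow different choices of $\delta$ to produce different admissible $D_1^{k+1}$; I would address this by noting that any such choice is made with the same rule (the volume-preserving thresholding convention) in both algorithms, so the equivalence is exact rather than just up to a null set. Otherwise, the argument is essentially algebraic and the hard work has already been done in Lemmas~\ref{lem:linearization} and~\ref{lem:linearization1}.
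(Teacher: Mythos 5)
Your proposal is correct and follows essentially the same route as the paper: computing $\phi_1-\phi_2$ by linearity of the convolution and invoking Young's equation to identify it with $\phi$, so that the two thresholding conditions coincide. Your additional remarks on the uniqueness of $\delta$ and the measure of the level set $\{\phi=\delta\}$ are a careful refinement the paper leaves implicit, but the core argument is identical.
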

\begin{proof}
We need only consider the thresholding equations \eqref{e:threshold1} and \eqref{e:threshold2}.
Direct computations give
\begin{align*}
\phi_1-\phi_2=&\frac{1}{\sqrt{\delta t}} G_{\delta t}*(\gamma_{LV} \chi_{D_2^k}+\gamma_{SL}\chi_{D_3})-\frac{1}{\sqrt{\delta t}} G_{\delta t}*(\gamma_{LV} \chi_{D_1^k}+\gamma_{SV}\chi_{D_3})\nonumber\\
=&\frac{1}{\sqrt{\delta t}} G_{\delta t}*\Big(\gamma_{LV} (\chi_{D_2^k}-\chi_{D_1^k})+(\gamma_{SL}-\gamma_{SV})\chi_{D_3}\Big)\\
=&
\frac{\gamma_{LV}}{\sqrt{\delta t}}
 G_{\delta t}*(\chi_{D_2^k}-\chi_{D_1^k}-\cos\theta_Y\chi_{D_3})=\phi.
\end{align*}
In the last equation, we used Young's equation.
Therefore, the thresholding equation \eqref{e:threshold1} is equivalent to the thresholding equation  \eqref{e:threshold2}.

\end{proof}

\subsection{Stability analysis}
In this subsection, we will show that the two algorithms above are stable,
in the sense that the total energy of $\mathcal{E}^{\delta t}$ always decreases in
the algorithm for any $\delta t>0$. We have the following theorem.

\begin{theorem}
Denote $(u_1^k,u_2^k)=(\chi_{D_1^k},\chi_{D_2^k}), \; k=0,1,2,...$, obtained in Algorithm I (or Algorithm II).
We have
\begin{equation}\label{e:energyDecay}
\mathcal{E}^{\delta t}(u_1^{k+1},u_2^{k+1})\leq \mathcal{E}^{\delta t}(u_1^{k},u_2^{k}),
\end{equation}
for all $\delta t>0$.
\end{theorem}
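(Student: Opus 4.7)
The plan is to exploit the fact that $\mathcal{E}^{\delta t}$ is a \emph{quadratic} functional in $(u_1,u_2)$ whose quadratic part becomes non-positive once we use the constraint $u_1+u_2=1$, combined with the optimality of $(u_1^{k+1},u_2^{k+1})$ for the linearized functional $\hat{\mathcal{L}}$. More concretely, I would write out the exact Taylor expansion
\begin{equation*}
\mathcal{E}^{\delta t}(u_1^k+v_1,u_2^k+v_2)=\mathcal{E}^{\delta t}(u_1^k,u_2^k)+\hat{\mathcal{L}}(v_1,v_2,u_1^k,u_2^k)+\mathcal{Q}(v_1,v_2),
\end{equation*}
with
\begin{equation*}
\mathcal{Q}(v_1,v_2)=\frac{\gamma_{LV}}{\sqrt{\delta t}}\int_{\tilde\Omega} v_1\,G_{\delta t}*v_2\,\dx,
\end{equation*}
since the cross term in $\mathcal{E}^{\delta t}$ is bilinear in $(u_1,u_2)$ and the other two terms are linear in $u_1$ and $u_2$ respectively; no higher-order terms appear. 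I would then plug in $v_i=u_i^{k+1}-u_i^k$.

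The first key observation is that both iterates satisfy $u_1+u_2=1$ on $\Omega$ and vanish on $D_3$, so $v_1+v_2=0$ a.e. on $\tilde\Omega$. Substituting $v_2=-v_1$ gives
\begin{equation*}
\mathcal{Q}(v_1,-v_1)=-\frac{\gamma_{LV}}{\sqrt{\delta t}}\int_{\tilde\Omega} v_1\,G_{\delta t}*v_1\,\dx\le 0,
\end{equation*}
because the Gaussian $G_{\delta t}$ has non-negative Fourier transform and is therefore a positive-definite convolution kernel (this is exactly the concavity computation already used in the proof of Lemma~\ref{lem:relax}). Thus the quadratic correction is automatically non-positive.

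The second key observation is that $\hat{\mathcal{L}}$ is linear in its first two arguments, so
\begin{equation*}
\hat{\mathcal{L}}(v_1,v_2,u_1^k,u_2^k)=\hat{\mathcal{L}}(u_1^{k+1},u_2^{k+1},u_1^k,u_2^k)-\hat{\mathcal{L}}(u_1^k,u_2^k,u_1^k,u_2^k).
\end{equation*}
By Lemma~\ref{lem:linearization}, $(u_1^{k+1},u_2^{k+1})$ minimizes $\hat{\mathcal{L}}(\cdot,\cdot,u_1^k,u_2^k)$ over $\mathcal{K}$, and $(u_1^k,u_2^k)\in\mathcal{B}\subset\mathcal{K}$ is a feasible competitor, hence this linear correction is also $\le 0$. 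Adding the two non-positive corrections to $\mathcal{E}^{\delta t}(u_1^k,u_2^k)$ yields \eqref{e:energyDecay}. For Algorithm II one invokes Proposition~\ref{prop:3.1} to reduce to Algorithm I.

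The only subtlety I anticipate is keeping careful track of the domain of integration when passing from $\Omega$ to $\tilde\Omega$: one must verify that the increment $v_1=u_1^{k+1}-u_1^k$, extended by zero on $D_3$, really does satisfy $v_1+v_2=0$ on the full extended domain. This is immediate because both $u_i^k$ and $u_i^{k+1}$ are supported in $\Omega$ and satisfy $u_1+u_2=\chi_\Omega$ there, but it is the one bookkeeping point that deserves to be written out explicitly.
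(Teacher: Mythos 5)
Your proof is correct and is essentially the paper's own argument: the paper likewise combines the exact quadratic expansion of $\mathcal{E}^{\delta t}$ about $(u_1^k,u_2^k)$, the minimality of $(u_1^{k+1},u_2^{k+1})$ for the linearization $\hat{\mathcal{L}}$, and the positive-definiteness of the Gaussian kernel applied to the increment $u_1^{k+1}-u_1^k$ after substituting $u_2^{k+1}-u_2^k=-(u_1^{k+1}-u_1^k)$. The only cosmetic difference is that the paper organizes the same algebra by moving the quadratic remainder to the right-hand side as a term $I\geq 0$ rather than writing an explicit Taylor expansion.
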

\begin{proof}By Proposition~\ref{prop:3.1}, we need only to prove  the theorem for Algorithm I.
By the definition of the linearization $\hat{\mathcal{L}}$ and  Lemma~\ref{lem:linearization}, we know that
\begin{align*}
&\mathcal{E}^{\delta t}( u_1^k,u_2^k)
+\frac{\gamma_{LV} }{\sqrt{\delta t}}
\int_{\tilde{\Omega}}u_1^k G_{\delta t}* u_2^k\dx
=\hat{\mathcal{L}}(u_1^k,u_2^k,u_1^k,u_2^k)
\\
&\geq \mathcal{L}(u_1^{k+1},u_2^{k+1},u_1^k,u_2^k)=\mathcal{E}^{\delta t}(u_1^{k+1},u_2^{k+1})\\
&\quad +\frac{\gamma_{LV} }{\sqrt{\delta t}}\left(\int_{\tilde{\Omega}} u_1^{k+1}G_{\delta t}* u_2^k\dx
+\int_{\tilde{\Omega}} u_2^{k+1}G_{\delta t}*u_1^k\dx
-\int_{\tilde{\Omega}} u_1^{k+1}G_{\delta t}* u_2^{k+1}\dx
\right).
\end{align*}
This leads to
\begin{align}\label{e:temp}
\mathcal{E}^{\delta t}(u_1^{k},u_2^k)\geq
\mathcal{E}^{\delta t}(u_1^{k+1},u_2^{k+1})+I,
\end{align}
with
{\begin{align*}
I=\frac{\gamma_{LV} }{\sqrt{\delta t}}\Big( &
\int_{\tilde{\Omega}} u_1^{k+1}G_{\delta t}* u_2^k\dx
+\int_{\tilde{\Omega}} u_2^{k+1}G_{\delta t}*u_1^k\dx
\\
&
-\int_{\tilde{\Omega}} u_1^{k+1}G_{\delta t}* u_2^{k+1}\dx
-\int_{\tilde{\Omega}} u_1^{k}G_{\delta t}* u_2^{k}\dx
\Big)\\
=-\frac{\gamma_{LV} }{\sqrt{\delta t}}&\int_{\tilde{\Omega}}(u_1^{k+1}-u_1^{k})G_{\delta t}* (u_2^{k+1}-u_2^k)\dx.
\end{align*}
By the fact that $u_1^{k}+u_2^{k}=u_1^{k+1}+u_2^{k+1}$,
 we have
\begin{align*}
I
&=\frac{\gamma_{LV} }{\sqrt{\delta t}}
\int_{\tilde{\Omega}}(u_1^{k+1}-u_1^k)G_{\delta t}*(u_1^{k+1}-u_1^k)\dx\geq 0.
\end{align*}}
This inequality together with \eqref{e:temp} implies \eqref{e:energyDecay}.
\end{proof}


\section{Numerical implementation and accuracy check}
In this section, we will introduce several techniques used to implement the algorithm efficiently.




\subsection{Calculation of convolution}

In Algorithm I, we need to calculate the  two convolutions $G_{\delta t}*(\gamma_{LV} \chi_{D_2^k}+\gamma_{SL}\chi_{D_3})$ and $G_{\delta t}*(\gamma_{LV} \chi_{D_1^k}+\gamma_{SV}\chi_{D_3})$ in an extended domain $\tilde{\Omega}$ which we can always choose to be a rectangular domain.  We can  use FFT to efficiently calculate the convolutions when the functions are periodic.
\ignore{
As discussed in the previous section, we can suppose the extended domain $\tilde{\Omega}$
is a cube. For such a domain, we can partition it uniformly with a mesh size $\delta x$.
 Furthermore, we can assume periodic condition on $\tilde{\Omega}$, since
the boundary condition does not change the movement of the interface
$\partial D_1\cap\partial D_2$ and $\partial D_1\cap\partial D_3$. (Here we suppose the width
of $D_3$ is of the same order of $\Omega$ that is much larger than $\delta t$.)
From the fundamental knowledge in Fourier analysis, we know:
\begin{align}
\mathcal{F}(f*g)=\mathcal{F}(f)\mathcal{F}(g)
\end{align}
where $\mathcal{F}$ represent the  Fourier transform operator. Discretely, we apply fast Fourier transform(FFT) on it so that the discrete convolution can be calculated by:
\begin{align}
f*g=\mathbf{iFFT}(\mathbf{FFT}(f).\times\mathbf{FFT}(g))
\end{align}
where $.\times$ is defined as $h(i,j)=f(i,j)\times g(i,j) \,  \forall i,j=1,2,\cdots,n$ if $h=f.\times g$ when $f$ and $g$ are two $n\times n$ matrix.
It is well-known that the computational complexity of FFT is $O(NlogN)$ with $N$ representing the number of grid points.
In summary, in step 1 in Algorithm 2, the computational complexity is $4NlogN+4	N+O(1)$ if the discrete Fourier transform of $G_{\delta t}$ is precomputed.
}
In our simulation, the characteristic functions (e.g. $\gamma_{LV} \chi_{D_2^k}+\gamma_{SL}\chi_{D_3}$)  are not periodic.
To calculate convolutions for non-periodic functions,  we can  further extend the domain by reflection  so that the functions are periodic in the extended domain.   However, the heat kernel $G_{\delta t}$ decays exponentially  and is  negligible  when $|x|>10\sqrt{\delta t}$. When we calculate the convolution, each target point will only be affected by a few neighboring points. Hence, if we apply the FFT without extending the computational domain, we will only have some error near the boundary of the computational domain (See Figure \ref{fig:ExtendvsNoextend}).  When the dynamic interface is far away from the boundary of the computational domain,  the solutions calculated with or without  the domain  extension  are the same, after the thresholding step.
Therefore, in our calculation, we always directly apply the FFT without extending the computational domain.

\begin{figure}
\begin{center}
\includegraphics[scale=0.15]{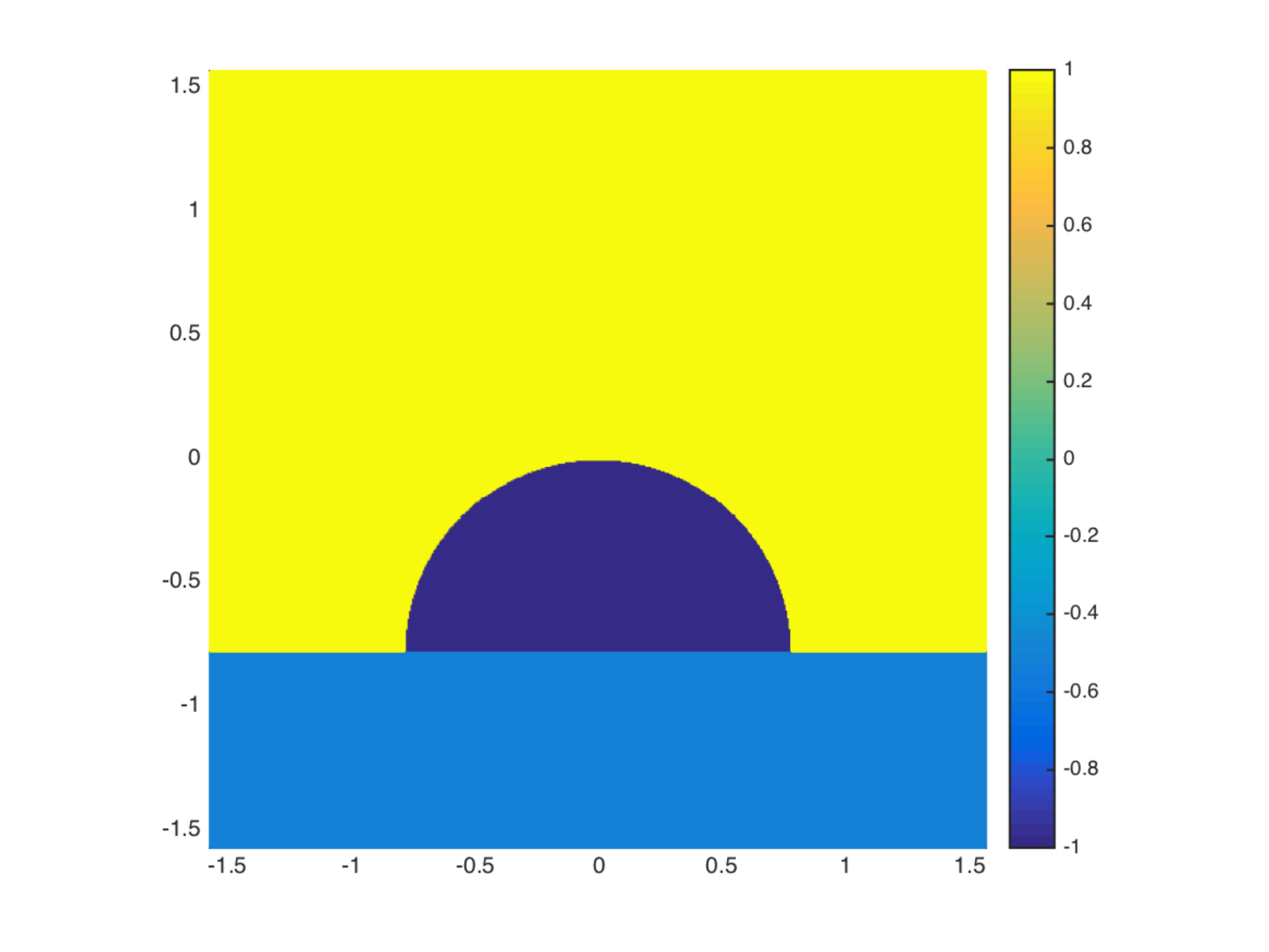} \includegraphics[scale=0.15]{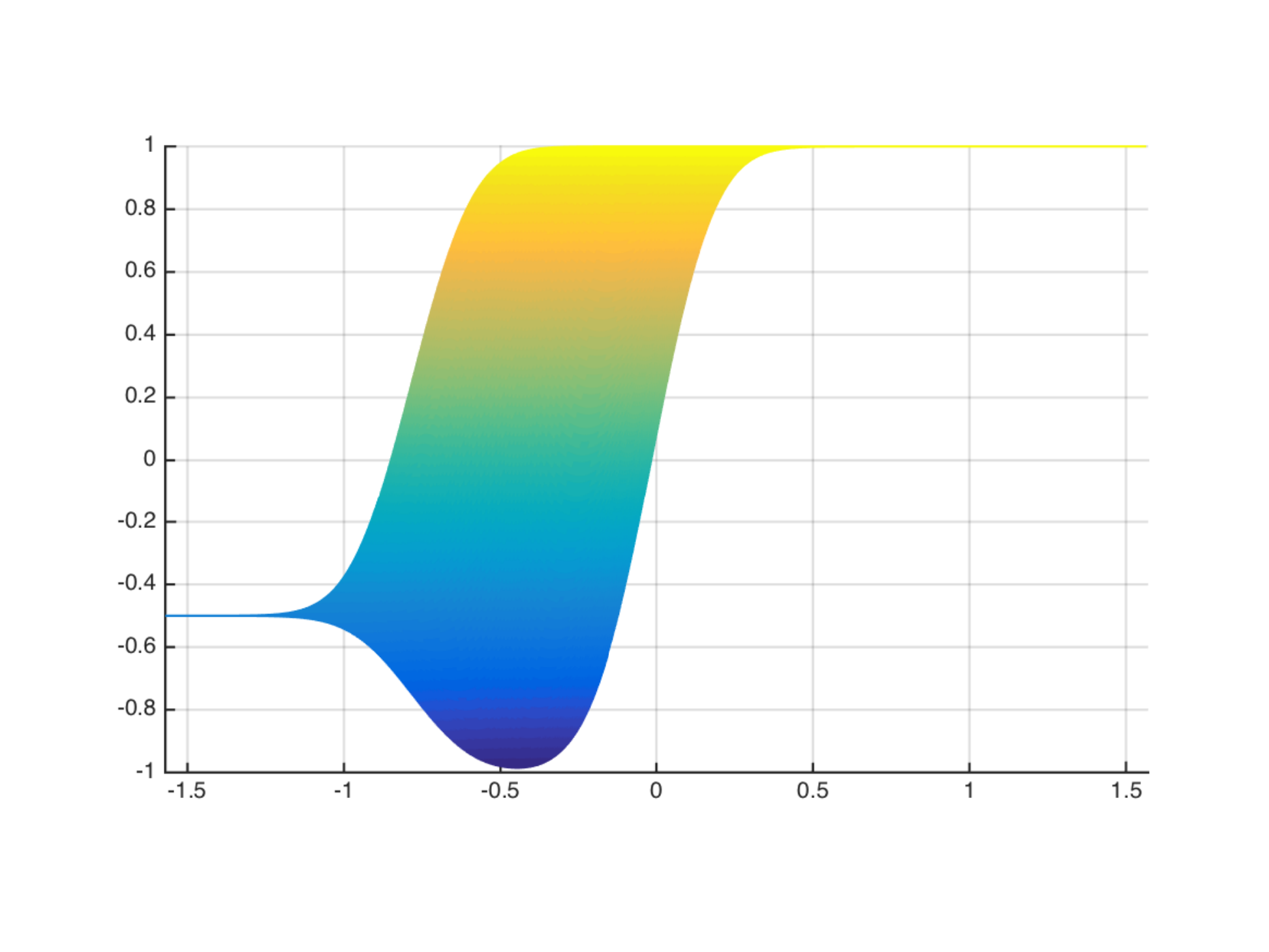} \includegraphics[scale=0.15]{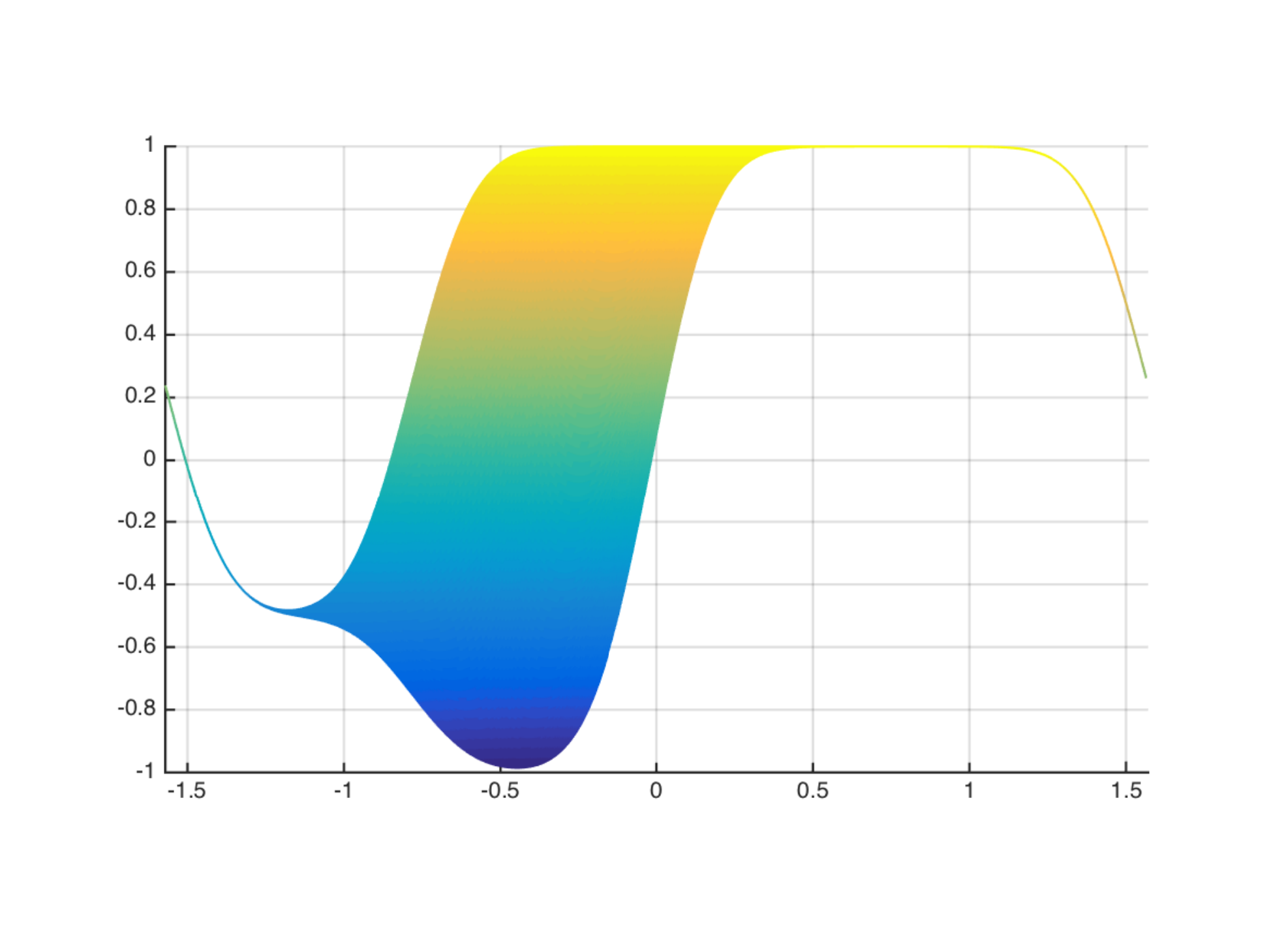} \includegraphics[scale=0.15]{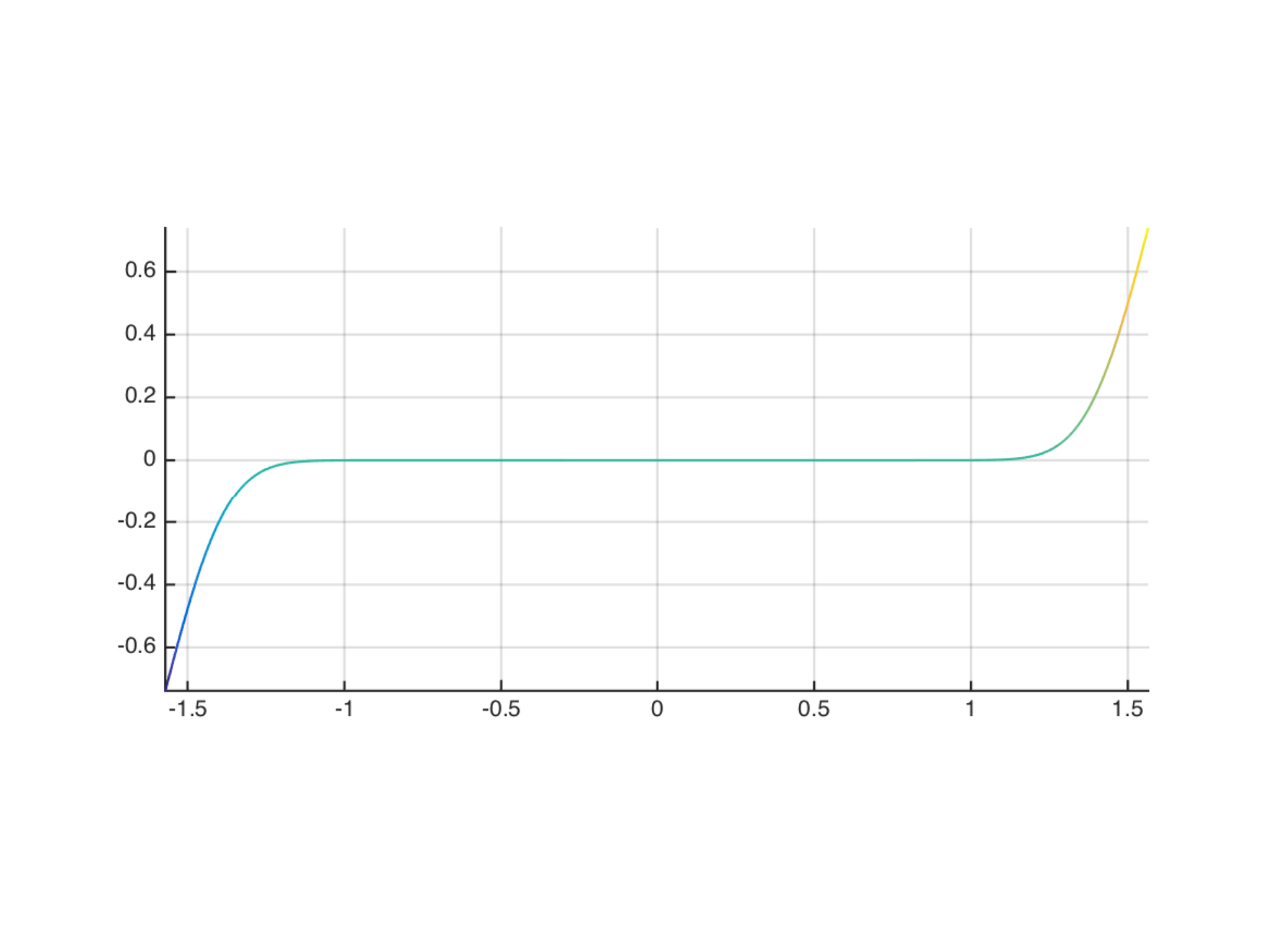}
\end{center}
\caption{From left to right: 1. The initial condition defined by the characteristic function of the domains. 2. The X-Z plane of the convolution between the heat kernel and the initial condition calculated by extending the domain by reflection (i.e. extending $[-\frac{\pi}{2},\frac{\pi}{2}]\times[-\frac{\pi}{2},\frac{\pi}{2}]$ to  $[-\frac{\pi}{2},\frac{3\pi}{2}]\times[-\frac{\pi}{2},\frac{3\pi}{2}]$). 3. The X-Z plane of the  convolution between the heat kernel and the initial condition calculated without extending the domain. 4. The difference between the second figure and the third figure.} \label{fig:ExtendvsNoextend}
\end{figure}


\subsection{A quicksort algorithm for  volume preservation} \label{sec:volumeconservation}
 In Step 2 of Algorithm I,  we need to enforce  volume preservation.  This is achieved by shifting the thresholding level by  $\delta$ as in (\ref{e:threshold1}).   The usual way to find $\delta$  is by some iteration method (e.g. bisection method, Newton method,  fixed point iteration, see \cite{ruuth2003simple}).   However, these iterative methods may be sensitive to the initial guess. In this section, we will introduce a direct and more efficient algorithm  to find a proper $\delta$.  If we consider a uniform mesh (in two dimensions) and denote the mesh size by $dx$,  the volume of a domain can be approximated by $V_0  \approx M\times dx^2$ (with first-order accuracy).  To maintain the same volume after thresholding, what we need to do in Step 2  is to find a threshold $\delta $ such that there are $M$ grid values of $\phi_1-\phi_2=g$ which are  less than $\delta$. Since we have the values of $\phi_1-\phi_2$ at each grid point, we  can use the quicksort algorithm (available in Matlab) \cite{hoare1962quicksort} to sort the values in ascending order into a list $\mathcal{S}=\{g_1, g_2, ...,g_M, g_{M+1}, ...\}$. We then take the average of the $M^{th}$ value and $(M+1)^{th}$ value in the ordered list $\mathcal{S}$, i.e. $\delta=\frac{g_{M}+g_{M+1}}{2}$ to be the threshold value $\delta$.  {A simple example to demonstrate this fast scheme is shown in Figure~\ref{fig:quicksortex}}. The scheme is summarized as  follows:
 {
 \begin{figure}
 \centering
 \includegraphics[scale=0.42]{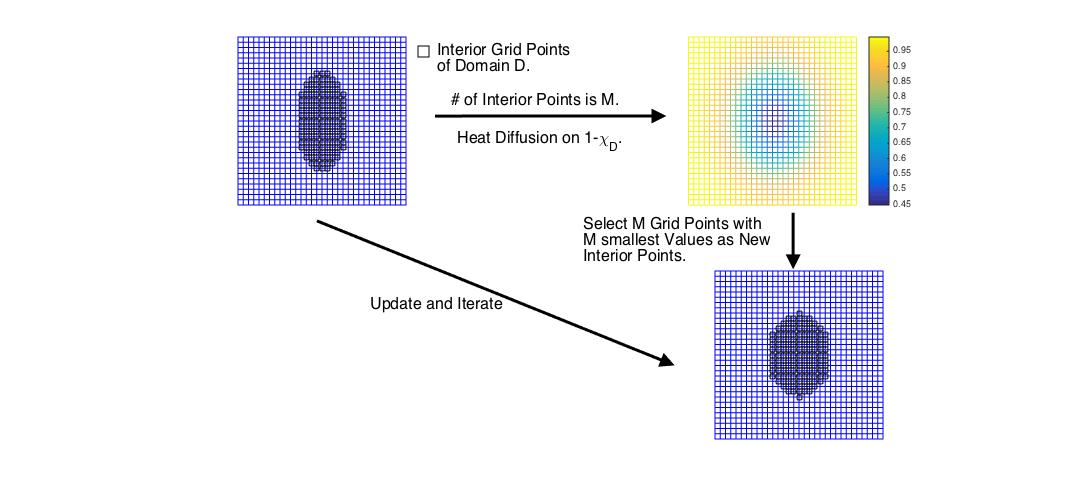}
 \caption{An example to demonstrate our new scheme for volume preservation. Initially, there are $M$ interior points. After convolution, we select M grid points with the $M$ lowest values as the new interior points.} \label{fig:quicksortex}
 \end{figure}
}
\bigskip

 {\bf{A quicksort scheme for volume conservation}
\it
\begin{itemize}\item[]
\begin{description}
\item[Step 0.] Set $V_0$ as the volume to be preserved and $M$ as the integer part of $V_0/dx^2.$
\item[Step 1.] Use a quicksort algorithm to sort $g=\phi_1-\phi_2$, which is defined in Step 2 in Algorithm 2, in ascending order into a list $\mathcal{S}=\{g_1, g_2, ...,g_M, g_{M+1}, ...\}$.
\item[Step 2.] Set  $\delta=\frac{g_{M}+g_{M+1}}{2}$.
\end{description}
\end{itemize}
}

\bigskip
In summary,  the computational complexity involved in  finding $\delta$ is $O(N\log(N))$ when the quicksort algorithm is used.
It is straightforward to see that this scheme will give the same $\delta$ as the iterative scheme proposed by Ruuth \cite{ruuth2003simple} (with first order accuracy).  However, our scheme costs much less computationally.

\bigskip
\subsection{Accuracy check of  Algorithm I}
{In this subsection, we will
check the accuracy of Algorithm I.
We first consider an example of  motion of two circles. One circle is centered at $(0.35,0.35)$ with radius  $0.2$ and the other is centered at $(0.7,0.7)$ with radius  $0.15$ (see  in Figure \ref{fig:QuickSortSmoothInitial}).
The volume-preserving
mean curvature flow is governed by the interface motion law $v_n = \kappa -\kappa_a$, where $v_n$ represents the normal velocity of the interface, $\kappa$ is the curvature and $\kappa_a$ is the average  curvature of the interface. By this motion, the larger circle will grow in volume while the smaller circle will shrink gradually.  The exact solution can be calculated  and the area enclosed by the smaller circle after a time $t=0.02$ is given by $0.0445079$ \cite{ruuth2003simple}. Using Algorithm I, we compute numerically the motion of the two circles  and compare the results with the exact solution.  Table~\ref{tab:AccuracyCheckQuickSort} shows the volume error as well as  $L^{\infty}$ error compared with the exact solution  at $t = 0.02$  for different $\delta t = (0.002,0.001,0.0005,0.00025$) with the same spatial resolution ($4096\times 4096$). The results  indicate the first-order accuracy of our scheme.
\begin{figure}[h]
\centering
\includegraphics[scale=0.36]{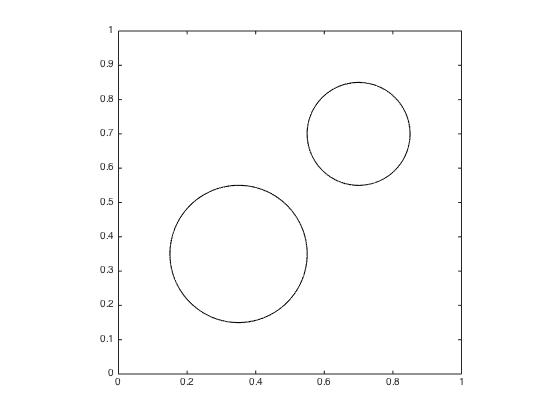}
\includegraphics[scale=0.36]{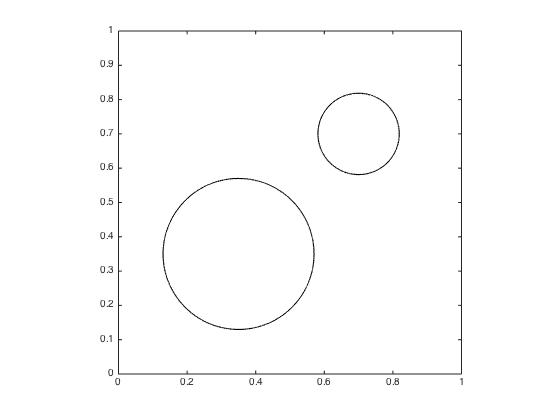}
\caption{The two circles at $t=0$ (left) and  $t=0.02$(right).}\label{fig:QuickSortSmoothInitial}
\end{figure}
\begin{table}
{
\caption{Accuracy check of Algorithm I for the two circle motion }\label{tab:AccuracyCheckQuickSort}
\begin{center}
\begin{tabular}{|c|c|c|c|c|}
  \hline
$ \delta t$ &  Volume Error  &Convergence Rate & $ L^{\infty}$ Error & Convergence Rate\\
\hline
0.002 &-0.0033&--&0.0074& --\\
\hline
 0.001 &-0.0015&1.15&0.0023&2.20\\
\hline
  0.0005&-0.00079&1.14&0.0011&1.08\\
\hline
  0.00025&-0.00035&1.03&0.00061&0.81\\
 \hline
\end{tabular}
\end{center}
}
\end{table}
}

{
We next  consider the  motion of two semi-circles on the solid surface. One is centered at $(0.3,0.25)$ with radius  $0.2$ and  the other one is centered at $(0.8,0.25)$ with radius  $0.15$ (See Figure~\ref{fig:QuickSortNonsmoothInitial}). In this problem, we set  Young's angle to  $\pi/2$. Then the wetting boundary condition in \eqref{e:AC}
will reduce to a homogeneous Neumann boundary condition. One can obtain the same motion as
that of two full circles by  symmetric reflection. Therefore,  the exact solution can also be calculated and
 the area enclosed by the smaller semi-circle after a time $t=0.02$ is $0.022254$ (half of the volume of the smaller circle in the previous example). Again,  using Algorithm I, we compute numerically the motion of the two circles and compare the results with the exact solution.  Table~\ref{tab:AccuracyCheckQuickSortNonsmooth} shows the volume error as well as  $L^{\infty}$ error compared with the exact solution  at $t = 0.02$  for different $\delta t = (0.002,0.001,0.0005,0.00025$) with the same spatial resolution ($4096\times 4096$).
The results  show first-order accuracy for volume preservation but a half-order convergence for $L^{\infty}$ norm.  This is typical
for multi-phase problems with a junction.
\begin{figure}[h]
\centering
\includegraphics[scale=0.36]{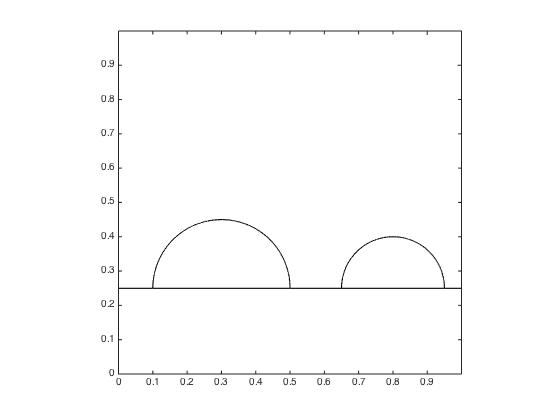}
\includegraphics[scale=0.36]{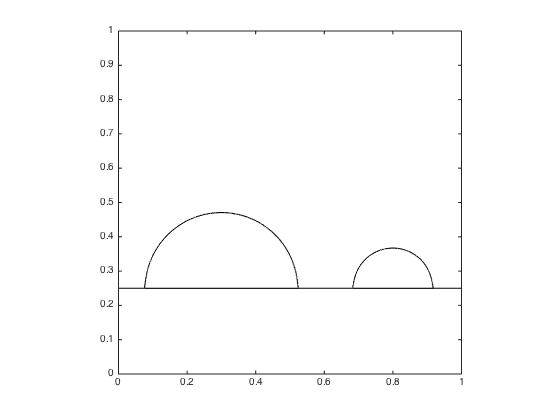}
\caption{The two semi-circles at $t=0$ (left) and  $t=0.02$(right).}\label{fig:QuickSortNonsmoothInitial}
\end{figure}
\begin{table}
{
\caption{Accuracy check of Algorithm I for the motion of two semi-circles on solid boundary.}\label{tab:AccuracyCheckQuickSortNonsmooth}
\begin{center}
\begin{tabular}{|c|c|c|c|c|}
  \hline
$ \delta t$ &  Volume Error  &Convergence Rate & $ L^{\infty}$ Error & Convergence Rate\\
\hline
0.002 &-0.0071&-- & 0.030& --\\
\hline
 0.001 &-0.0033&1.17 &0.011&1.78\\
\hline
  0.0005&-0.0016&1.03&0.0057&0.88\\
\hline
  0.00025&-0.00092&0.75&0.0035&0.62\\
 \hline
\end{tabular}
\end{center}
}
\end{table}
}

{
 Finally, we consider the case of drop spreading on a solid surface with a general static contact angle.  The initial drop is a semi-circle centered at $(0,-\frac{\pi}{4})$ with a radius $\frac{\pi}{4}$ (see Figure~\ref{fig:AccuracyCheckTime}).  We  set three surface tensions to  $\gamma_{LV}=1,\gamma_{LS}=1$ and $\gamma_{SV}=1+\sqrt{3}/2$ which gives  Young's  angle $\frac{\pi}{3}$.
Thus the drop will spread  and  its contact angle decreases gradually to  Young's angle.
Since we do not know the exact solution in this case, the reference solution of the liquid-vapor interface after time $t=0.2$ is numerically computed by choosing  sufficiently  small $\delta t = 0.00125$ and $dx =\frac{\pi}{4096}$. We then compare the numerical solution with the reference solution for different  $\delta t = 0.04,0.02, 0.01, 0.005$ but  with the same $dx=\frac{\pi}{4096}$. The results are shown in Table~\ref{tab:timeconvergencerate}, which suggests a first-order
convergence rate in $L^1$ error and a half-order convergence rate in $L^{\infty}$ error.
\begin{figure}[h]
\centering
\includegraphics[scale=0.3]{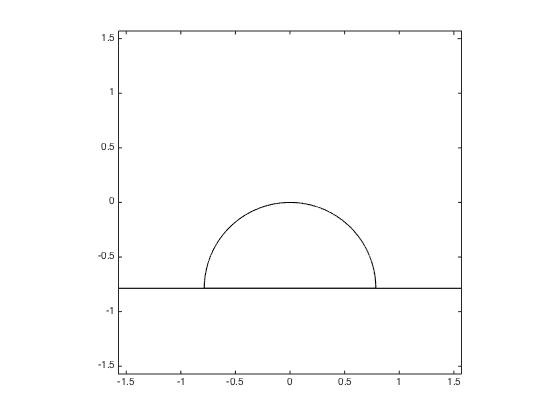}
\includegraphics[scale=0.3]{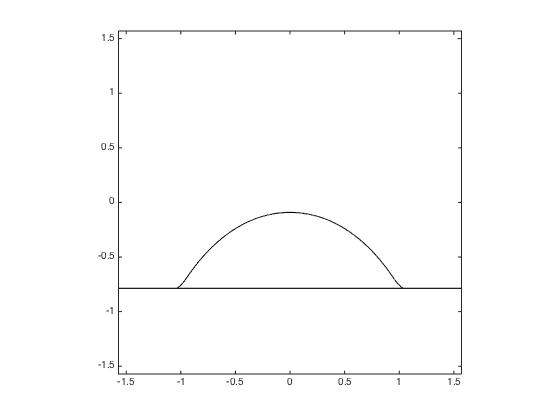}
\caption{The shape of a drop at $t=0$ (left) and  $t=0.2$(right).}\label{fig:AccuracyCheckTime}
\end{figure}
\begin{table}
{
\caption{Accuracy check w.r.t the time step $\delta t$ for the drop spreading problem. }\label{tab:timeconvergencerate}
\begin{center}
\begin{tabular}{|c|c|c|c|c|}
  \hline
$ \delta t$ & $L^{1}$ Error  &Convergence Rate & $L^{\infty}$ Error    & Convergence Rate \\
\hline
  0.04&0.0676&--& 0.0928&--\\
  \hline
0.02 &0.0347& 0.95&0.0621&0.49\\
\hline
 0.01 &0.0170&1.04&0.0383&0.62\\
\hline
  0.005&0.0079&1.14& 0.0215&0.79\\

 \hline
\end{tabular}
\end{center}
}
\end{table}

}

\subsection{A time refinement scheme for contact point motion } \label{sec:modifiedalg}

For any given space mesh, the only parameter in Algorithm I is the time step $\delta t$. According to  Merriman, Bence and Osher \cite{merriman1992diffusion}, the choice of $\delta t$ should meet two requirements. The first one is that $\delta t$ should be small enough so that the approximation of the energy is reasonably accurate. The second is that $\delta t$ should also be large enough so that the boundary curve moves at least one grid cell on the spatial grid (otherwise the interface would not move after the thresholding step), that is, $\delta t \gg \frac{\delta x} {\kappa}$ where ${\kappa}$ is the average  curvature and $\delta x$ is the space mesh size. Since we have volume conservation, the interface will eventually become circular with a constant  curvature. Therefore, for a given space mesh size $\delta x$, there is a $\delta t$ threshold below which the interface will not move.  Therefore time step refinement beyond this threshold will   not improve the accuracy of the interface location.   However,  when the interface intersects  the solid boundary,  the motion of the contact point follows  different dynamics and is controlled by the Young stress $f=\gamma_{LV}(\cos\theta-\cos\theta_Y)$. This may lead  to a different time scale (and a different time step constraint).  Numerical results show that time step refinement improves the accuracy near the contact point.   Hence, we propose a time refinement  scheme to minimize the interfacial energy.   The idea is to first use a proper  (large enough) time step $\delta t$ so that the evolution of the interface reaches equilibrium. We then improve the contact point accuracy by  repeatedly halving the time step $\delta t$ until the difference between the solutions of succeeding steps is within a tolerance $\eps_1$.

\medskip

{ {\bf Modified Algorithm I}
\it
\begin{itemize}\item[]
\begin{description}
\item[Step 0.] Given initial $D_1^0, D_2^0\subset\Omega$, such that $D_1^0\cap D_2^0=\emptyset$, $D_1^0\cup D_2^0=\Omega$ and
$|D_1^0|=V_0$. Set $D_1^{*}=D_1^0$. Set a tolerance parameter $\eps>0$.
\item[Step 1.]For given set $(D_1^k, D_2^k)$, we define two functions
\begin{align}
\phi_1=\frac{1}{\sqrt{\delta t}} G_{\delta t}*(\gamma_{LV} \chi_{D_2^k}+\gamma_{SL}\chi_{D_3}), \
\phi_2=\frac{1}{\sqrt{\delta t}} G_{\delta t}*(\gamma_{LV} \chi_{D_1^k}+\gamma_{SV}\chi_{D_3}).
\end{align}
\item[Step 2.] Find a constant $\delta$ to ensure volume preservation using the quick-sort algorithm  in section~\ref{sec:volumeconservation}, so that the set
\begin{equation}
\tilde{D}_1^{\delta}=\{ x\in\Omega | \phi_1 < \phi_2+\delta. \}
\end{equation}
satisfying $|\tilde{D}_1^{\delta}|\approx V_0$.
Denote $D_1^{k+1}=\tilde{D}_1^{\delta}$, $D_2^{k+1}=\Omega\setminus D_1^{k+1}$  .
\item[Step 3.]{IF $|D_1^{k}-D_1^{k+1}|\leq \eps,$\newline
\hspace{1cm} if $ |D_1^{*}-D_1^{k+1}|\geq \eps, $set $ \delta t=\frac{\delta t}{2}$, $D_1^{*}=D_1^{k+1}$, and go back to step 1.\\
$ \quad \quad \quad$ else, set $D_1^{*}=D_1^{k+1}$ and  stop. \\
$ \quad \quad \quad$ endif \\
$\quad$  ELSE,  go back  to step 1. \\
$\quad$ ENDIF}
\end{description}
\end{itemize}
}

\subsection{Accuracy check of the Modified Algorithm I} \label{sec:accuracycheck}
To check the accuracy of the  Modified  Algorithm I described in Section~\ref{sec:modifiedalg}, we consider  a two-dimensional drop spreading on a solid surface. The equilibrium state is a circular arc with  Young's  angle  when  the minimum of the total interfacial energy is reached.
In our experiment, the initial liquid phase is given by a semi-circle centered at $(0,-\frac{\pi}{4})$ with radius $\frac{\pi}{4}$. So the volume of the drop is $\frac{\pi^3}{32}$. We set three surface tensions as  $\gamma_{LV}=1,\gamma_{LS}=1$ and $\gamma_{SV}=1+\sqrt{3}/2$, which gives  Young's  angle $\frac{\pi}{3}$. In this case,  the exact  equilibrium state can be computed
explicitly.

In Figure~\ref{fig:256adaptintime}, Figure~\ref{fig:512adaptintime} and Figure~\ref{fig:1024adaptintime}, we show the errors of solutions (characteristic functions) computed  by both Algorithm I and the Modified Algorithm I, compared with the exact solution (the characteristic function of the exact equilibrium state) which shows the location error of the interface.    It is obvious that the errors near contact points are much larger than those at other places of the interfaces. However, after time step refinement, the Modified Algorithm 1 gives much improved results. Figure~\ref{fig:exact_numerical} compares well  the numerical solution and exact solution at the equilibrium.

We then check the accuracy of the algorithms via calculating the convergence rate of the $L^1$ error
and $L^\infty$ error with respect to the mesh refinement.  Table~\ref{tab:convergencerate}  shows the $L^1$ errors of both schemes. Again the Modified Algorithm I gives much better results.  The  results also show that the convergence rate for $L^1$ error of our algorithm is of first order.
{  Table~\ref{tab:convergencerateinfty}  shows the $L^\infty$ errors of both schemes. Again the Modified Algorithm I gives superior  results. The  example shows that
the time refinement scheme improves the accuracy dramatically. But this does not necessarily  mean that the convergence order is also improved, especially for the $L^1$ error.}

\begin{figure}
\begin{center}
\includegraphics[scale=0.2]{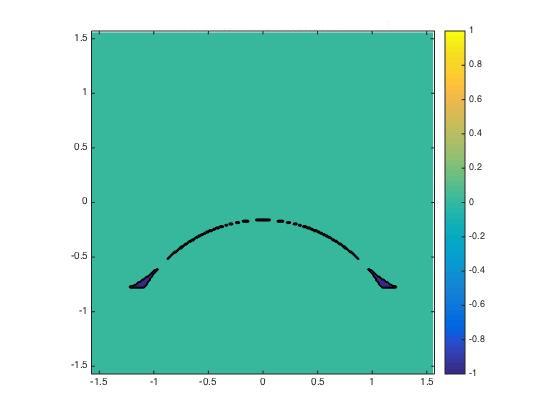}
\includegraphics[scale=0.2]{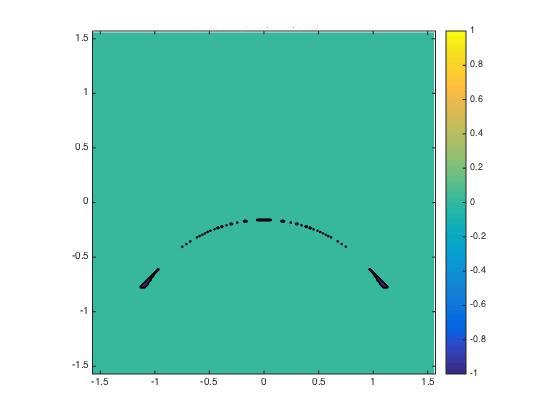}
\caption{ Left: 256 $\times$ 256 grid points, $\delta t=2dx$ without refinement in time. Right: 256 $\times$ 256 grid points, $\delta t=2dx$ initially with refinement in time, $\epsilon=1.0e^{-10}$} \label{fig:256adaptintime}
\includegraphics[scale=0.2]{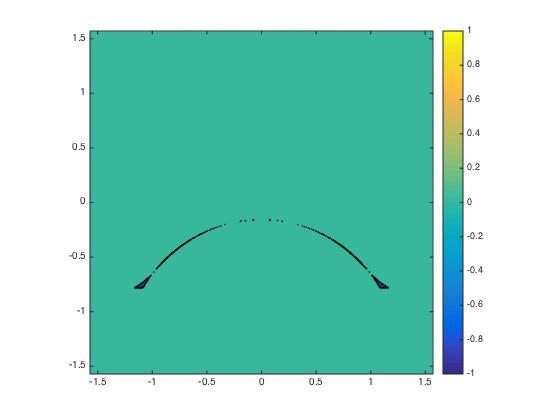}
\includegraphics[scale=0.2]{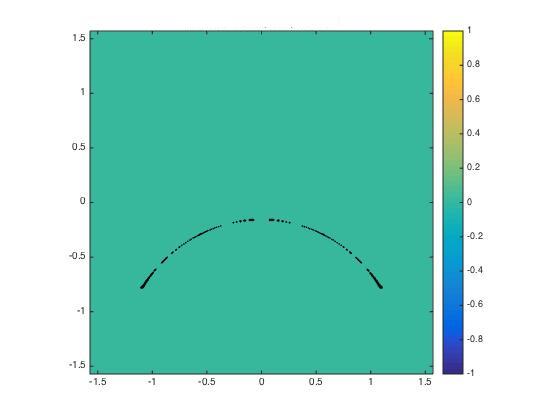}
\caption{Left: 512 $\times$ 512 grid points, $\delta t=2dx$ without refinement in time. Right: 512 $\times$ 512 grid points, $\delta t=2dx$ initially with refinement in time, $\epsilon=1.0e^{-10}$} \label{fig:512adaptintime}
\includegraphics[scale=0.2]{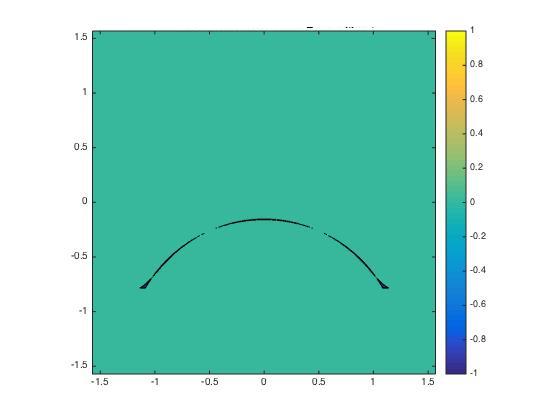}
\includegraphics[scale=0.2]{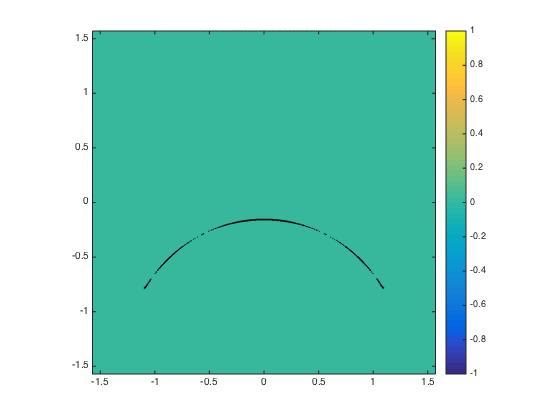}
\caption{Left: 1024 $\times$ 1024 grid points, $\delta t=2dx$ without refinement in time. Right: 1024 $\times$ 1024 grid points, $\delta t=2dx$ initially with refinement in time, $\epsilon=1.0e^{-10}$} \label{fig:1024adaptintime}
\end{center}
\end{figure}

\begin{figure}
\begin{center}
\includegraphics[scale=0.4]{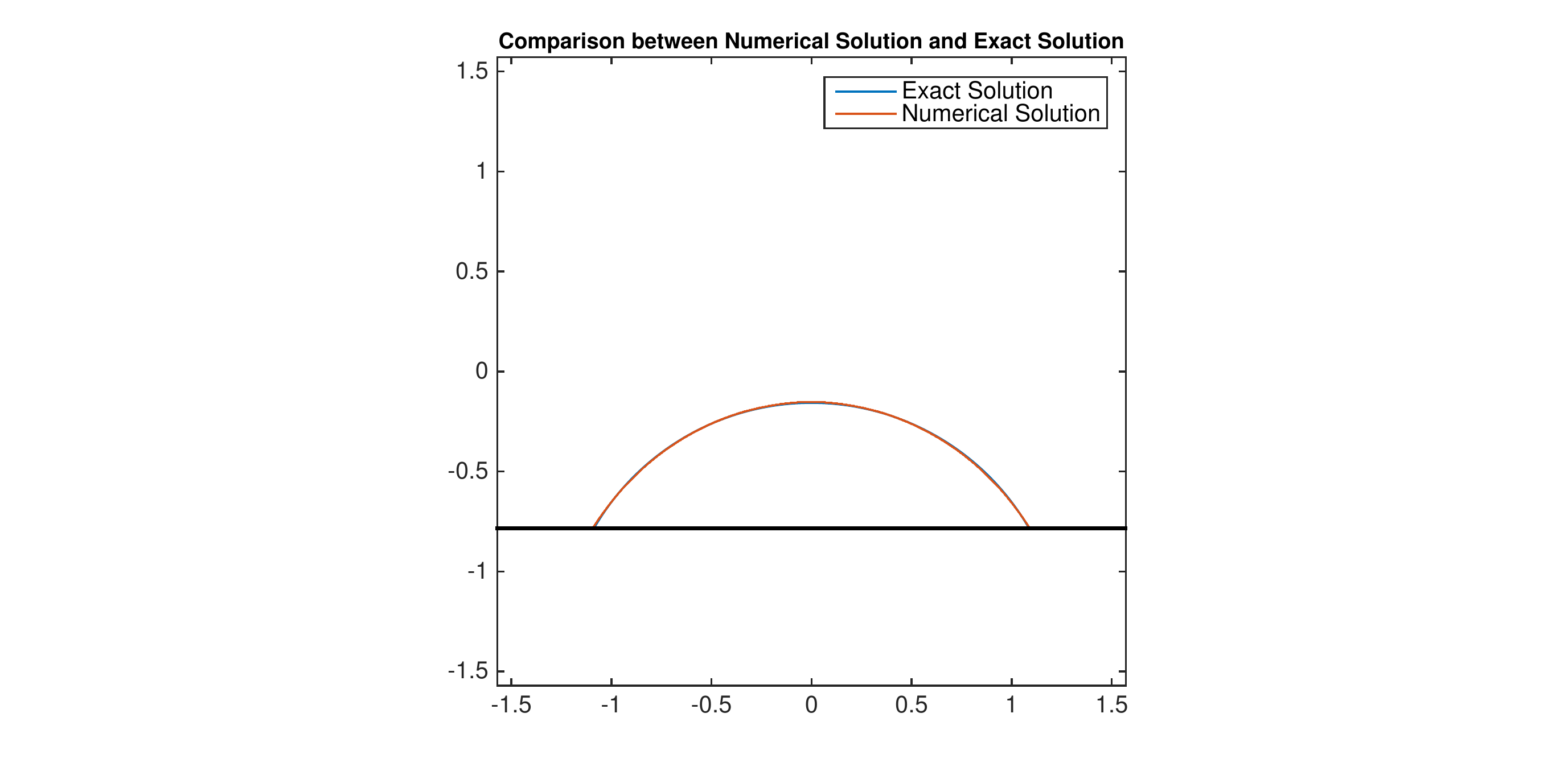}
\end{center}
\caption{Comparison of the numerical solution at equilibrium  to the exact solution.  Red line represents the exact solution while the blue line represents the numerical solution (computed with 1024 $\times$ 1024 grid points).}  \label{fig:exact_numerical}
\end{figure}

\begin{table}
\caption{Accuracy Check  in  $L^1$ norm}\label{tab:convergencerate}
\begin{center}
\begin{tabular}{|c|c|c|c|c|}
  \hline
 Grid points & $L^{1}$ error  & Convergence & $L^{1}$ error    & Convergence \\
 &&rate &with  time refinement& rate\\
\hline
128$ \times$ 128 &0.1473&-&0.0515&--\\
\hline
 256$ \times$ 256 &0.0482&2.06&0.0271&0.90\\
\hline
  512$\times$ 512 &0.0200&1.41& 0.0109&1.49\\
\hline
  1024 $\times$ 1024&0.0116&0.72& 0.0054&1.02\\
 \hline
\end{tabular}
\end{center}
\end{table}

\begin{table}
{
\caption{Accuracy Check in  $L^{\infty}$  norm}\label{tab:convergencerateinfty}
\begin{center}
\begin{tabular}{|c|c|c|c|c|}
  \hline
 Grid points & $L^{\infty}$ error  &Convergence & $L^{\infty}$ error    & Convergence \\
 &&rate &with  time refinement& rate\\
\hline
128$ \times$ 128 &0.1473&-&0.0982&--\\
\hline
 256$ \times$ 256 &0.0831&0.77&0.0585&0.68\\
\hline
  512$\times$ 512 &0.0552&0.51& 0.0307&0.91\\
\hline
  1024 $\times$ 1024&0.0333&0.66& 0.0149&1.1\\
 \hline
\end{tabular}
\end{center}
}
\end{table}

 \section{A drop spreading on a chemically pattern solid surface} \label{sec:chemicallypattern}
We first study the hysteresis behavior of a drop spreading on a chemically patterned surface.  We consider the  quasi-static spreading of a drop. To simulate the hysteresis process.
we need to increase or decrease the volume of the drop gradually.  In each step, we need to compute the equilibrium state of the drop after liquid is added or extracted, which is very computationally demanding.  We show that our threshold dynamics method
can simulate the process efficiently.

We assume that the surface is periodically patterned in the  interval $(-\pi/2,\pi/2)$ and the interval is divided into $2k+1$ periods with an equal partition of two materials $\mathcal{A}, \mathcal{B}$ away from the center. The center  part is occupied by the material $\mathcal{B}$ (See  Figure~\ref{fig:chemicallypatternsolid}). Assume $\theta_{\mathcal{A}}$, $\theta_{\mathcal{B}}$ are  Young's angles for  materials $\mathcal{A}$ and $\mathcal{B}$  respectively. $r$ is the initial radius of a semi-circle on the surface and $\Delta V$ is the volume we add to the drop each time.
The procedure for explicitly calculating the change in contact angle and position of contact points with respect to the volume for  simple two-phase systems on a chemically patterned surface is given in \cite{XuWang2011}.

\begin{figure}
\begin{center}
\includegraphics[scale=0.5]{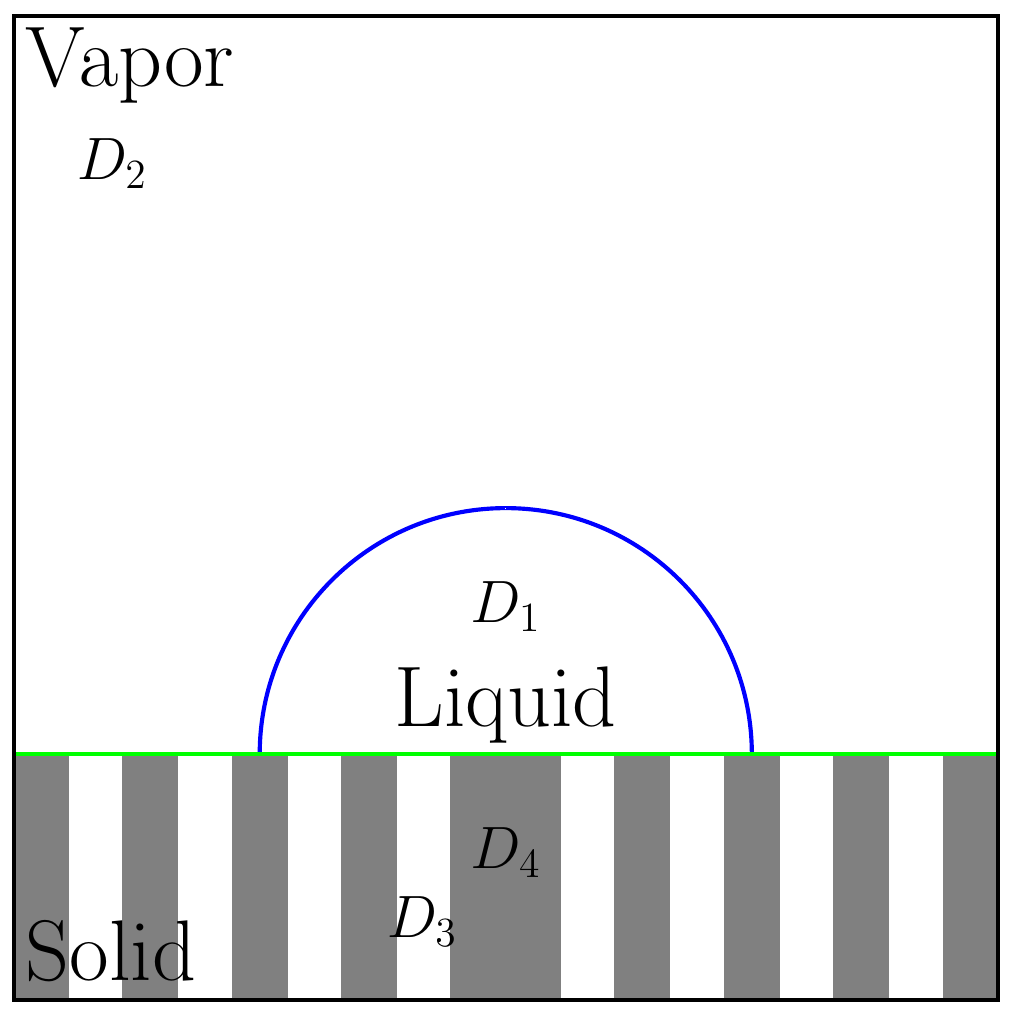}
\end{center}
\caption{A sketch of a drop spreading on a  chemically patterned solid surface. Here $D_3$ (white region) and $D_4$ (shaded region)  represent materials $\mathcal{A}$ and $\mathcal{B}$ respectively. }\label{fig:chemicallypatternsolid}
\end{figure}
\begin{figure}
\begin{center}
\includegraphics[scale=0.23]{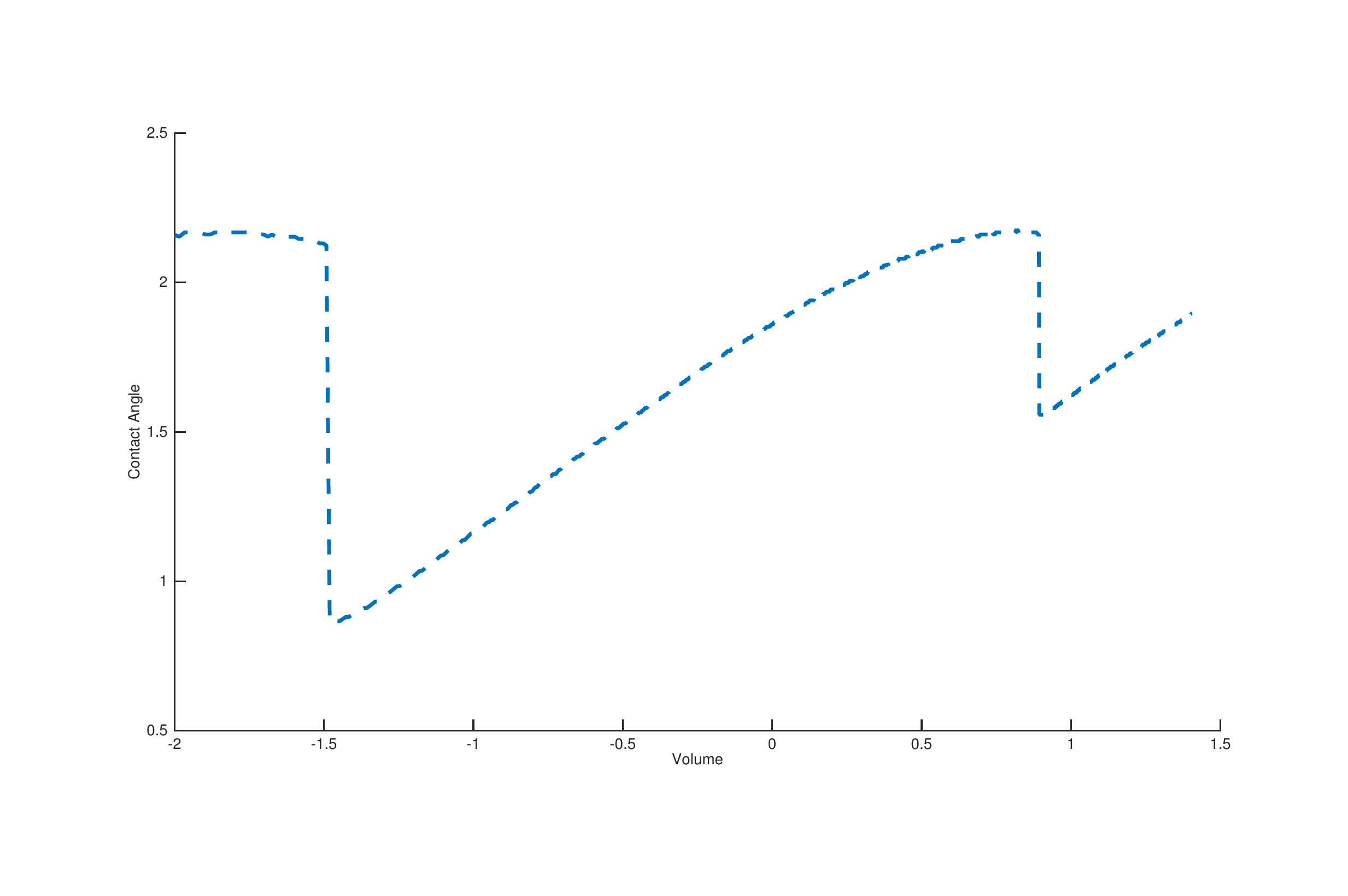}   \includegraphics[scale=0.23]{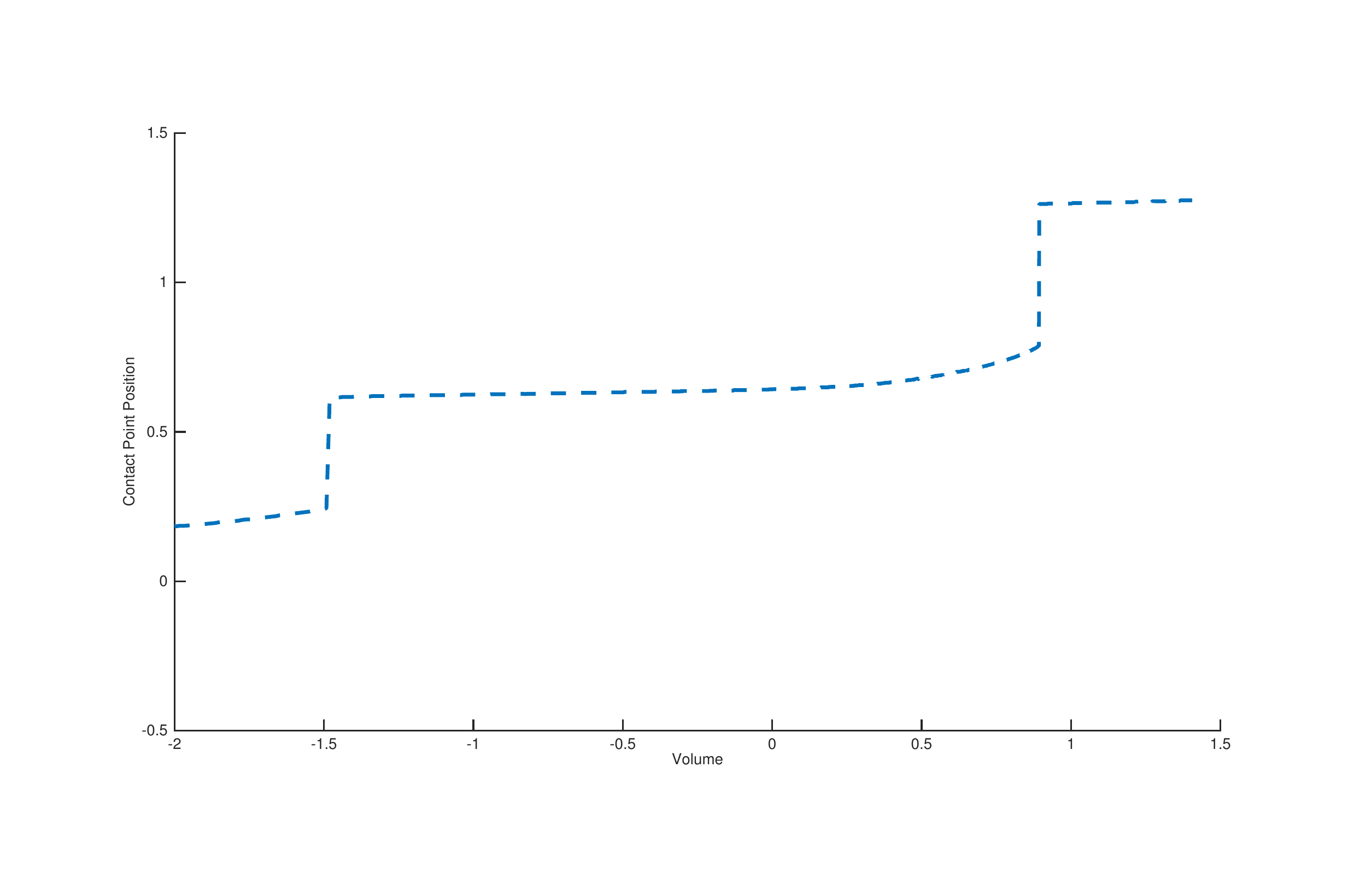}
\end{center}
\caption{The  stick-slip motion of a drop with k=2 when the volume is increasing. $\theta_{\mathcal{A}}=\frac{\pi}{5}$, $\theta_{\mathcal{B}}=\frac{7\pi}{10}$.  }\label{fig:CAH_K_2}
\end{figure}
\begin{figure}
\begin{center}
\includegraphics[scale=0.23]{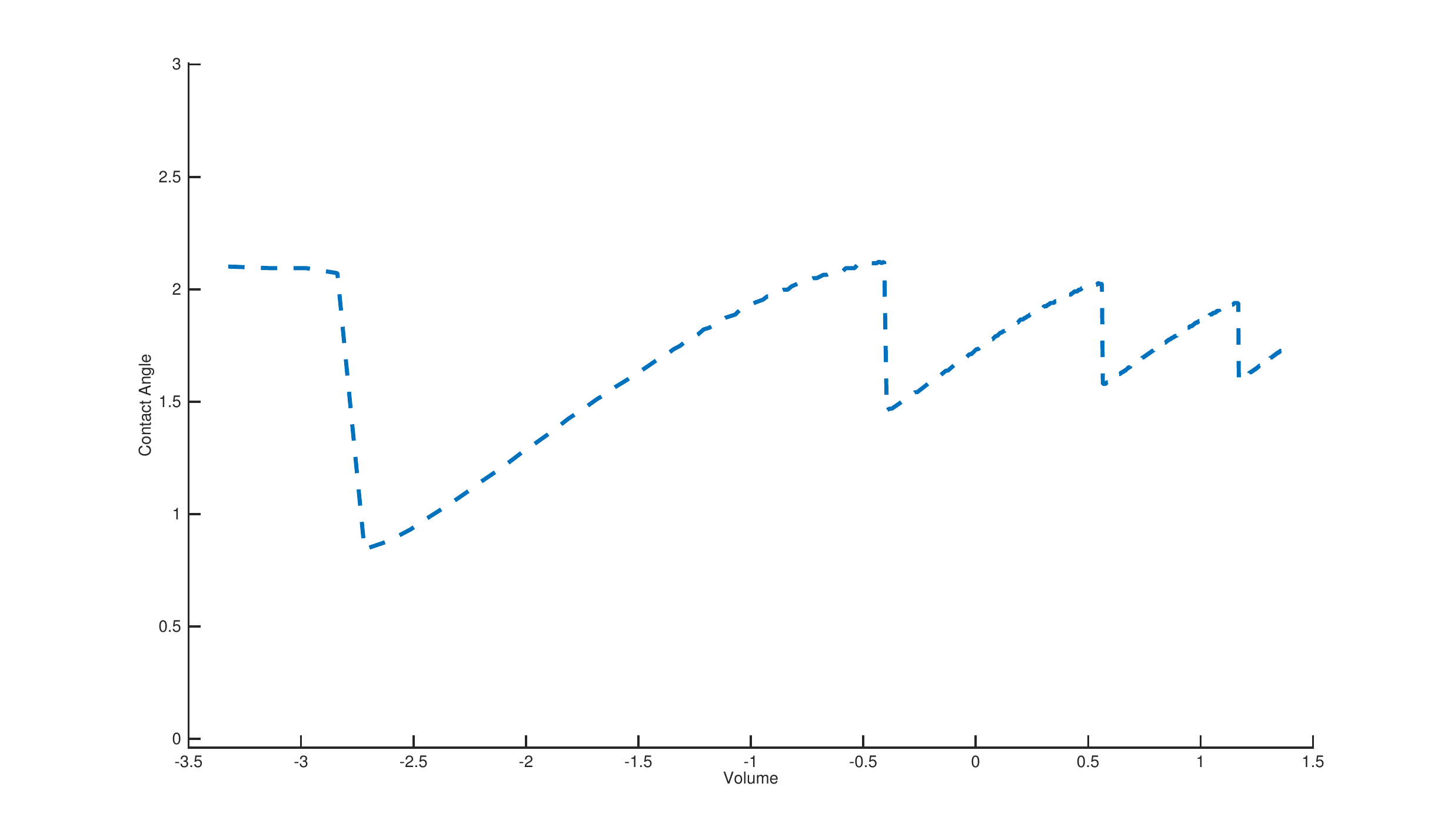}   \includegraphics[scale=0.23]{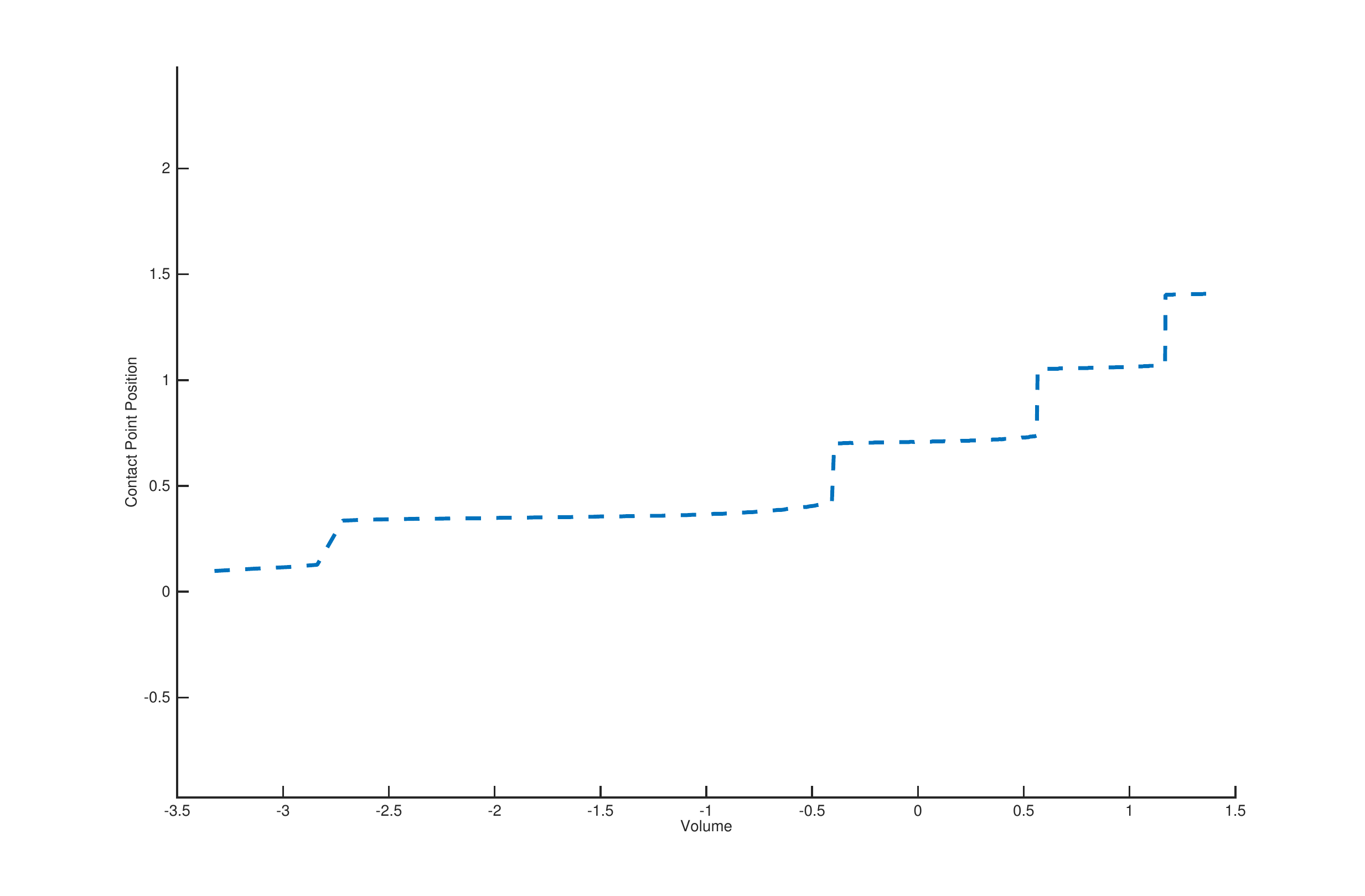}
\end{center}
\caption{The  stick-slip motion of a drop with k=4 when the volume is increasing. $\theta_{\mathcal{A}}=\frac{\pi}{5}$, $\theta_{\mathcal{B}}=\frac{7\pi}{10}$.  }\label{fig:CAH_K_4}
\end{figure}
\begin{figure}
\begin{center}
\includegraphics[scale=0.23]{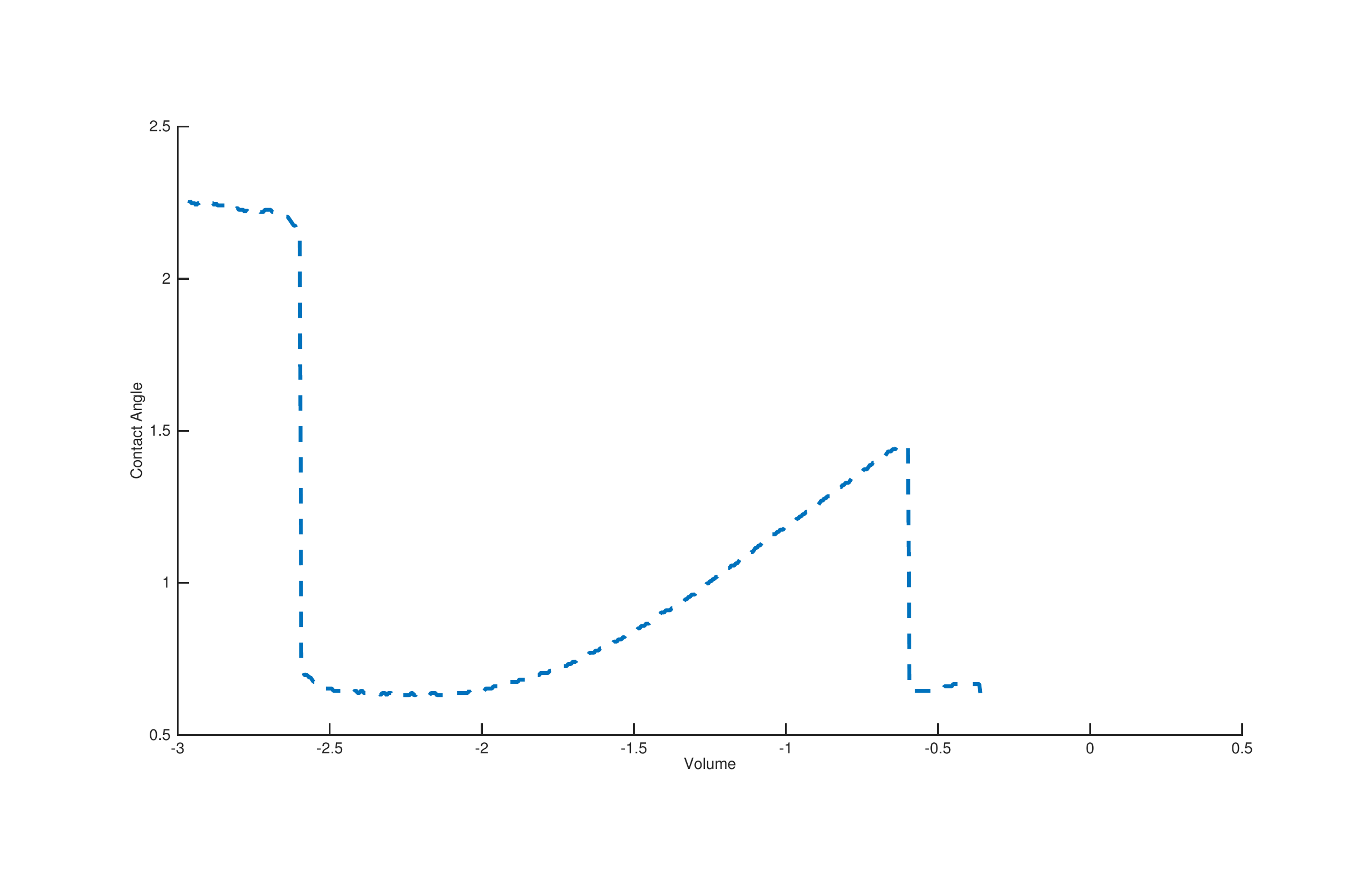}   \includegraphics[scale=0.23]{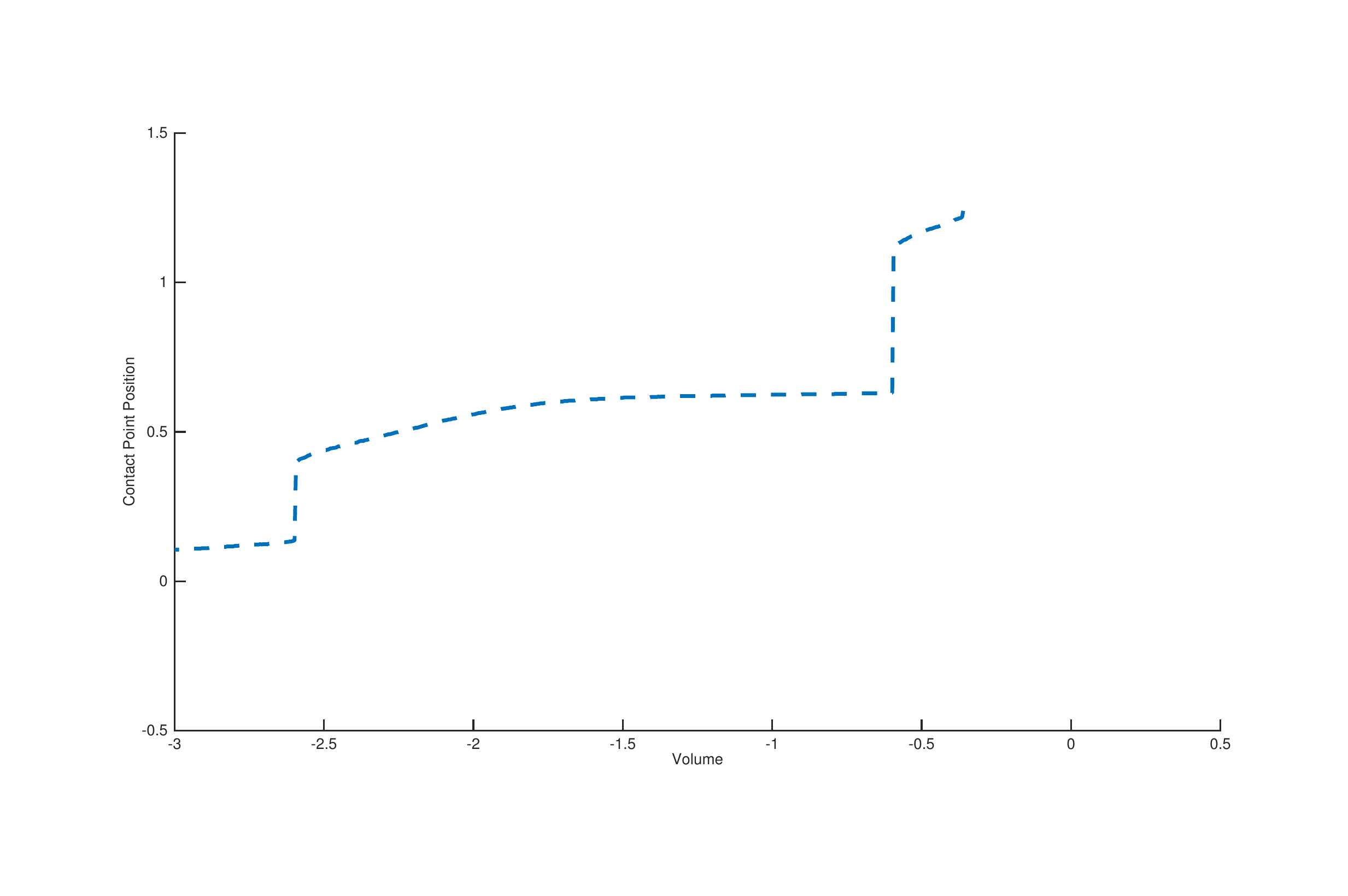}
\end{center}
\caption{The  stick-slip motion of a drop with k=2 when the volume is decreasing. $\theta_{\mathcal{A}}=\frac{\pi}{5}$, $\theta_{\mathcal{B}}=\frac{7\pi}{10}$.  } \label{fig:CAH_K_2_DE}
\end{figure}
\begin{figure}
\begin{center}
\includegraphics[scale=0.23]{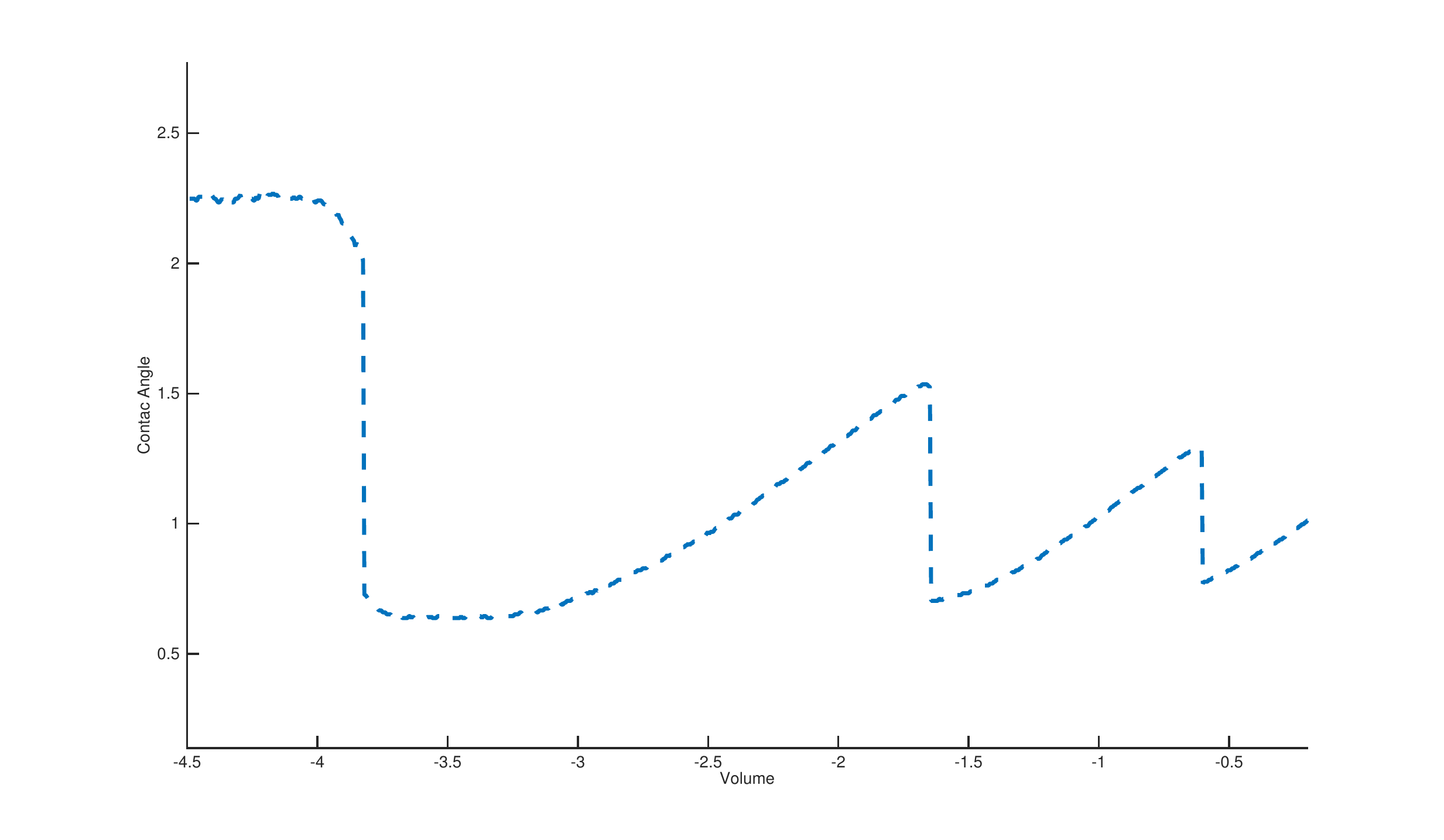}   \includegraphics[scale=0.23]{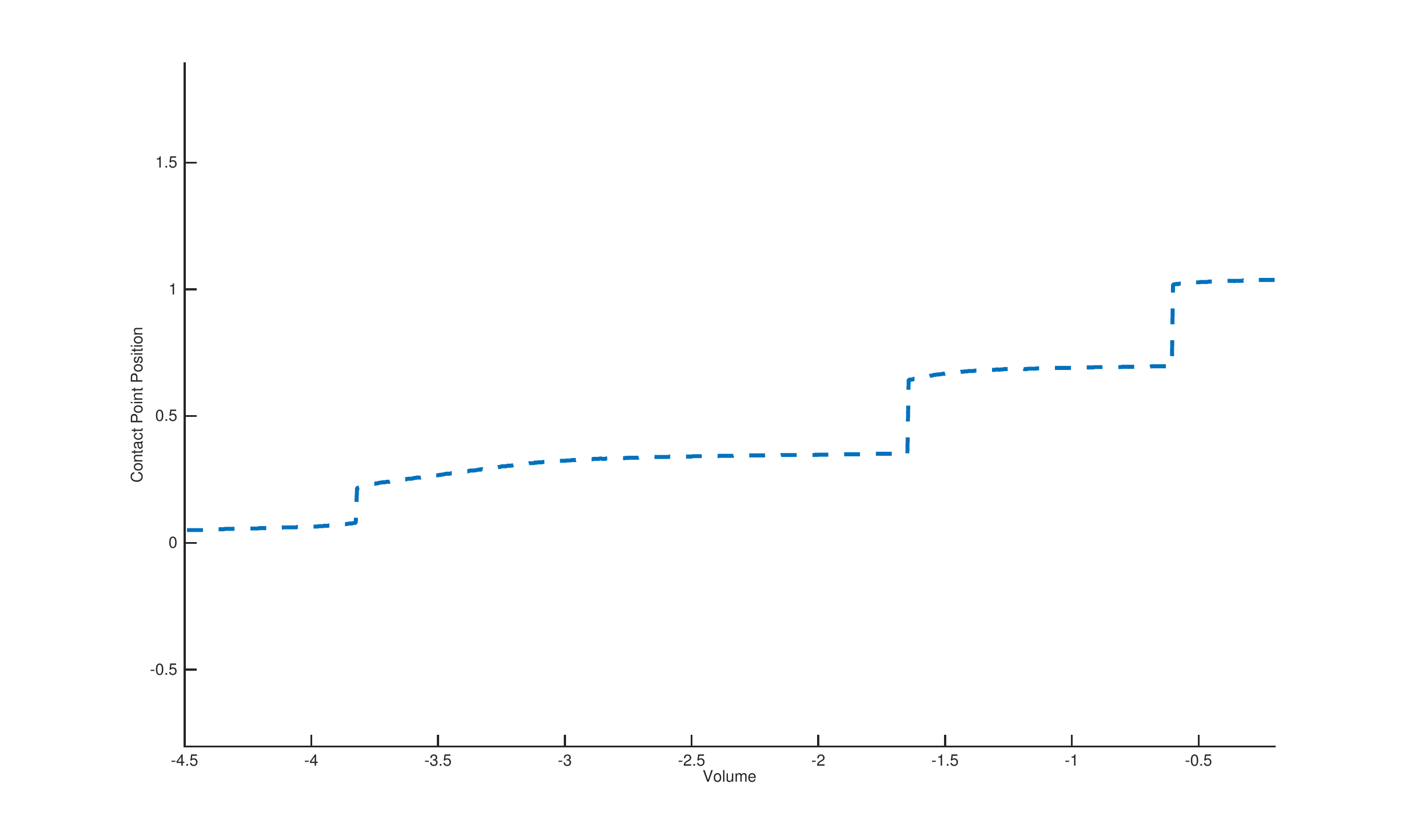}
\end{center}
\caption{The stick-slip motion of a drop with k=4 when the volume is decreasing. $\theta_{\mathcal{A}}=\frac{\pi}{5}$, $\theta_{\mathcal{B}}=\frac{7\pi}{10}$.  }\label{fig:CAH_K_4_DE}
\end{figure}

To implement the Modified Algorithem I,  we need to divide our solid region into two parts $D_3$ and $D_4$ representing  material $\mathcal{A}$ and  material $\mathcal{B}$ with different surface tensions, respectively (as shown in Figure~\ref{fig:chemicallypatternsolid}),  and  modify the original $\gamma_{SL}\chi_{D_3}$ and $\gamma_{SV}\chi_{D_3}$ to $\gamma_{S_1L}\chi_{D_3}+\gamma_{S_2L}\chi_{D_4}$ and $ \gamma_{S_1V}\chi_{D_3}+\gamma_{S_2V}\chi_{D_4}$.  As the volume of the drop increases quasi-statically, we use the Modified Algorithem I to calculate the equilibrium state for each fixed volume.

 We take $\theta_{\mathcal{A}}=\frac{\pi}{5}$, $\theta_{\mathcal{B}}=\frac{7\pi}{10}$.  For the advancing drop,  we plot the contact angle and position of contact point as functions of increasing volume  in Figure~\ref{fig:CAH_K_2} for $k=2$ and  in Figure~\ref{fig:CAH_K_4} for $k=4$.    The contact point goes through the stick-slip motion, and the contact angle oscillates near the advancing angle  $\theta_{\mathcal{B}}$ for larger $k$.

For the receding drop, we plot the contact angle and location of the contact point as functions of increasing volume  in Figure~\ref{fig:CAH_K_2_DE} for $k=2$ and  in Figure~\ref{fig:CAH_K_4_DE} for $k=4$.  Again, the contact point goes through the stick-slip motion,  and the contact angle oscillates near the receding angle  $\theta_{\mathcal{A}}$ for larger $k$.

{In  Figure~\ref{fig:pattern},  we show two quasi-static drops.   One is in the process of increasing in volume (advancing) and
the other is in the process of decreasing in volume (receding). We  see that the two states have very
different contact angles although the volume is the same.
This clearly shows that the contact angle hysteresis  as the shape of a drop on a chemically patterned surface depends on its
history.}
\begin{figure}
\begin{center}
\includegraphics[scale=0.35]{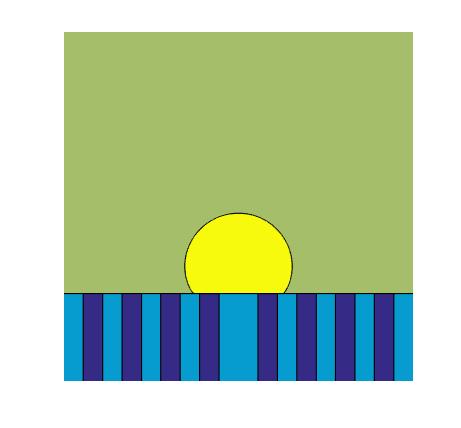}  \includegraphics[scale=0.35]{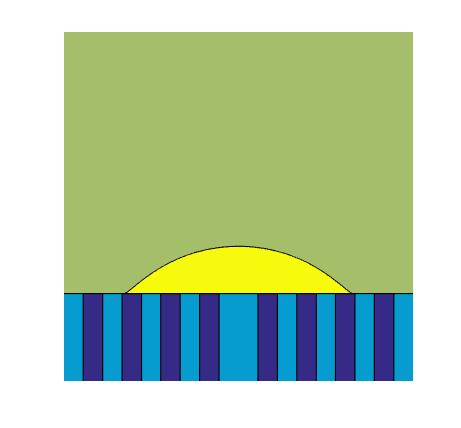}
\end{center}
\caption{Left: A quasi-static drop in the process of growing in  volume on a chemically patterned surface when the initial  volume is 0.5883. Right: A quasi-static drop in the process of shrinking in volume on a chemically patterned surface when the initial volume is 0.5883.
Young's angle in the light blue part is $\mathcal{B}$ while that in the dark blue part is $\mathcal{A}$. Here k=4. }\label{fig:pattern}
\end{figure}

\section{A drop spreading on a rough solid surface}  \label{sec:rough}
\begin{figure}
\vspace{-0.1cm}
\begin{center}
\includegraphics[scale=0.5]{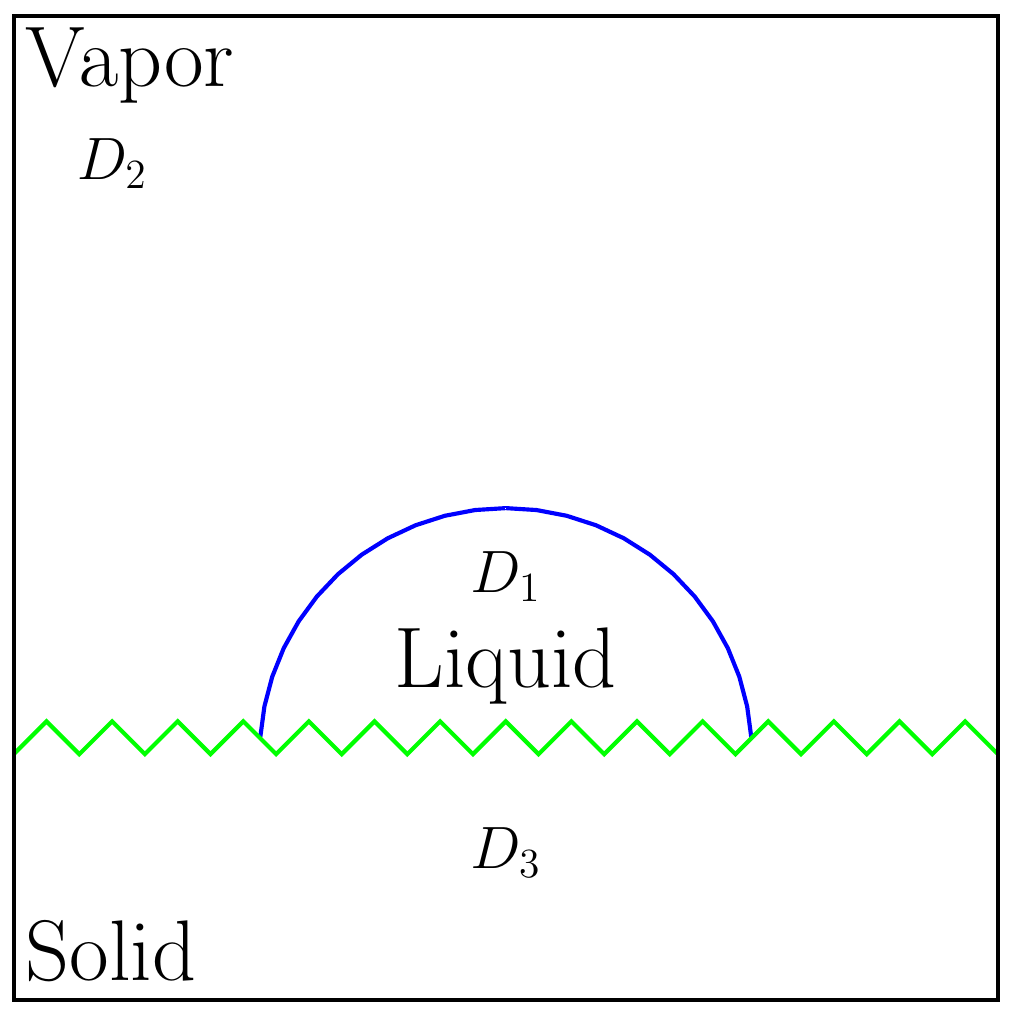}  
\end{center}
\caption{Left:  A sketch of a drop spreading on a rough solid surface.  The solid surface is given by  a sawtooth profile. }
\label{fig:roughsolid}
\begin{center}
 \includegraphics[scale=0.23]{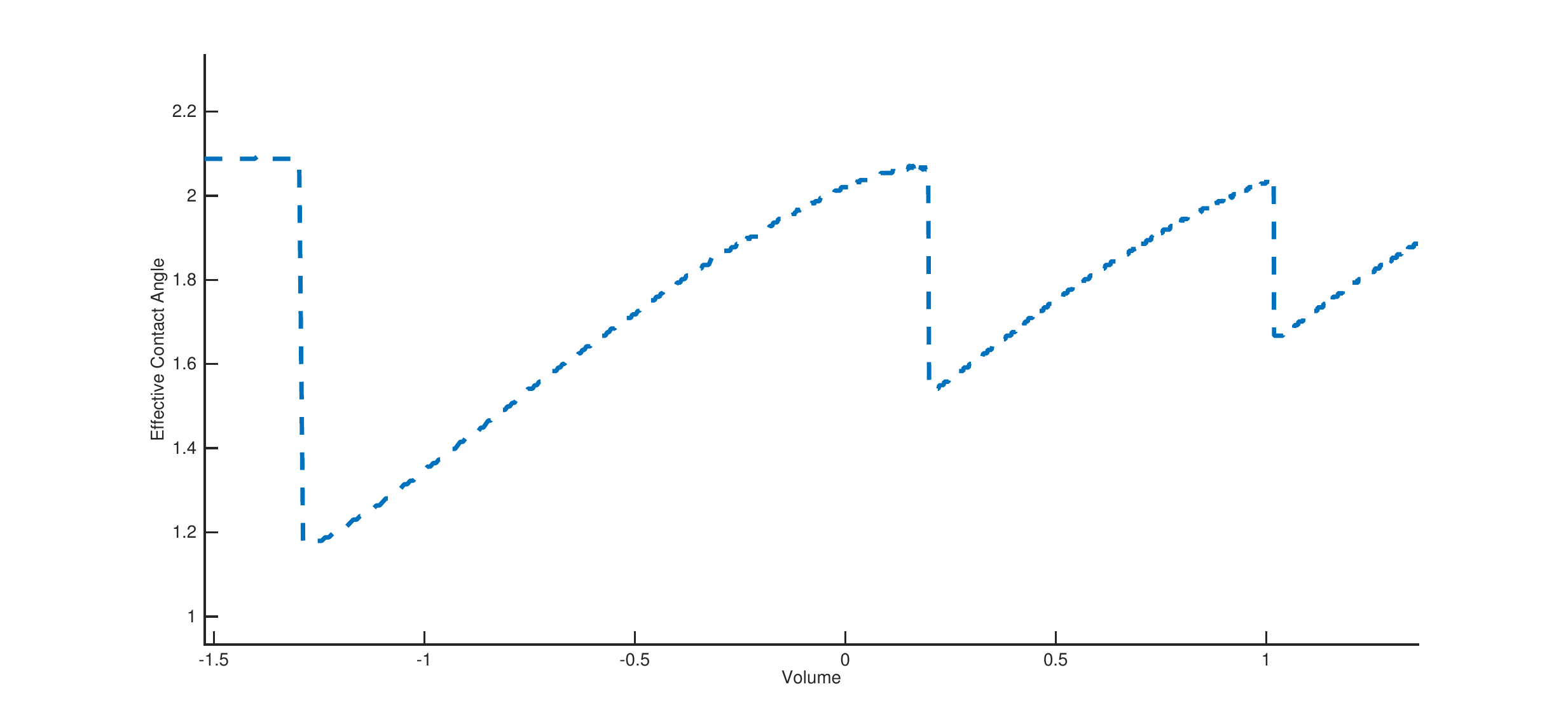}   \includegraphics[scale=0.23]{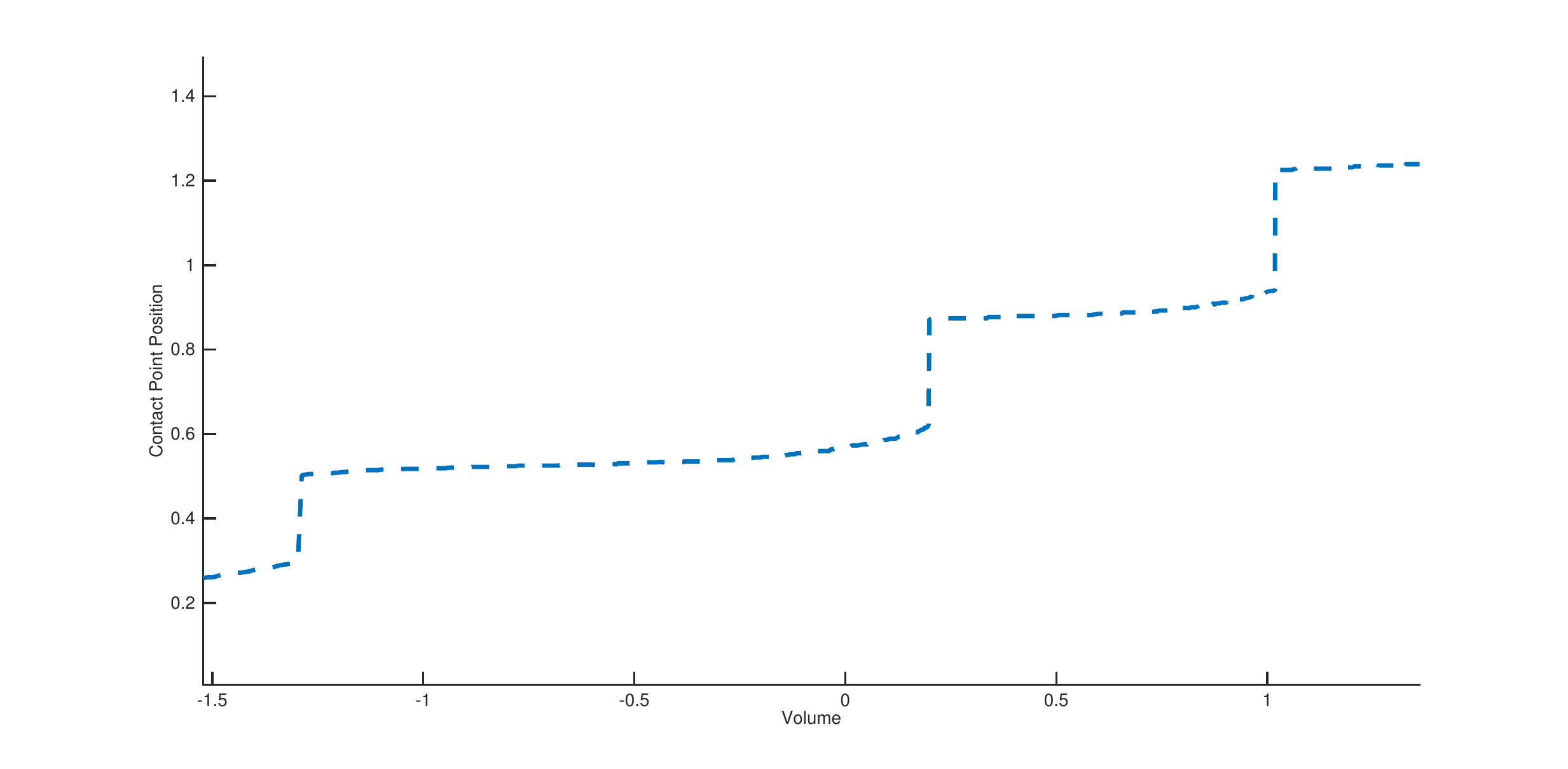}
\end{center}
\caption{The  stick-slip motion of a drop on a rough surface when the volume is increasing.  $\theta=\frac{\pi}{2}, k=4, \alpha=\frac{\pi}{6}$. }\label{fig:ROUGH}
\begin{center}
 \includegraphics[scale=0.23]{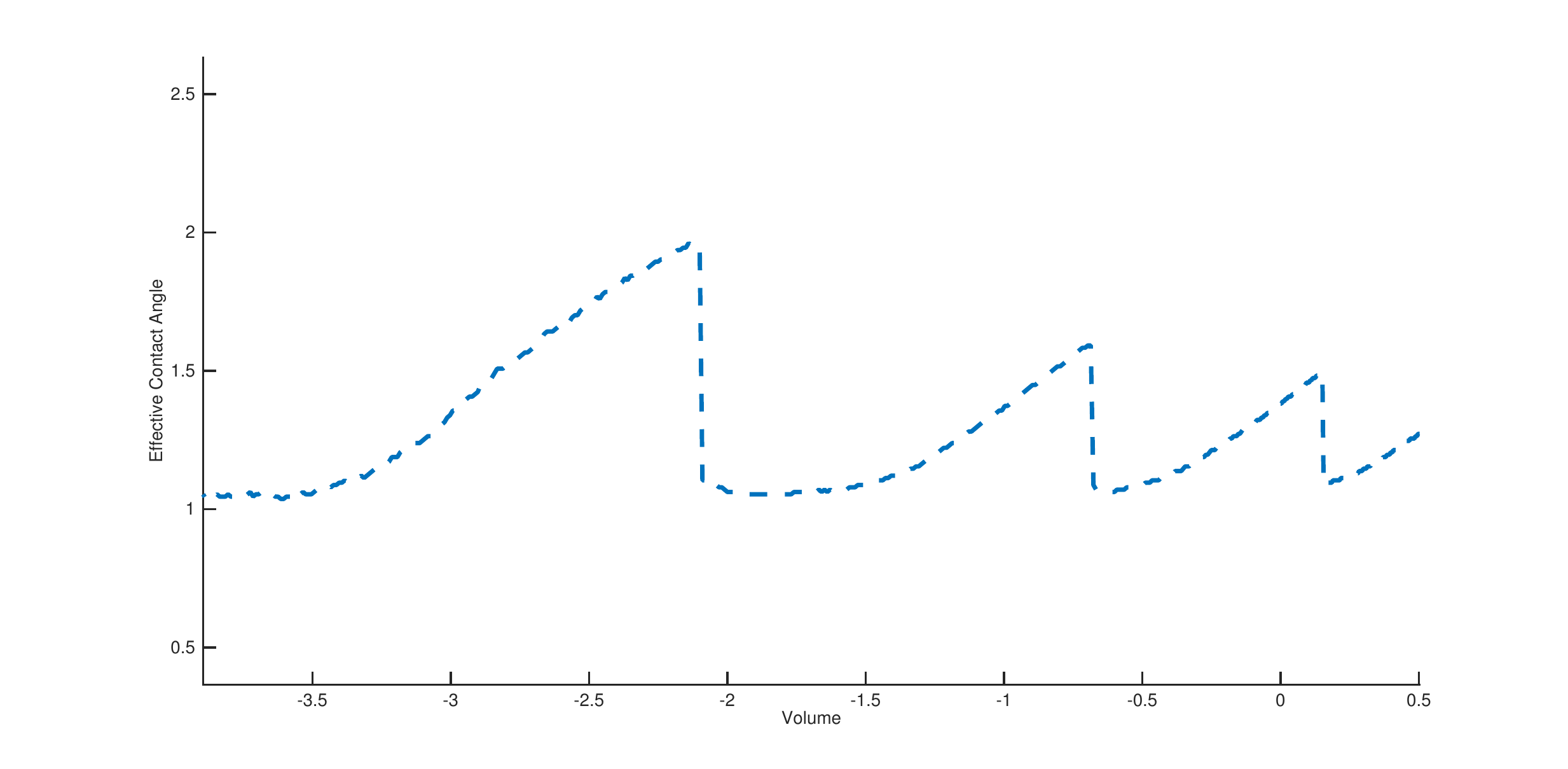}   \includegraphics[scale=0.23]{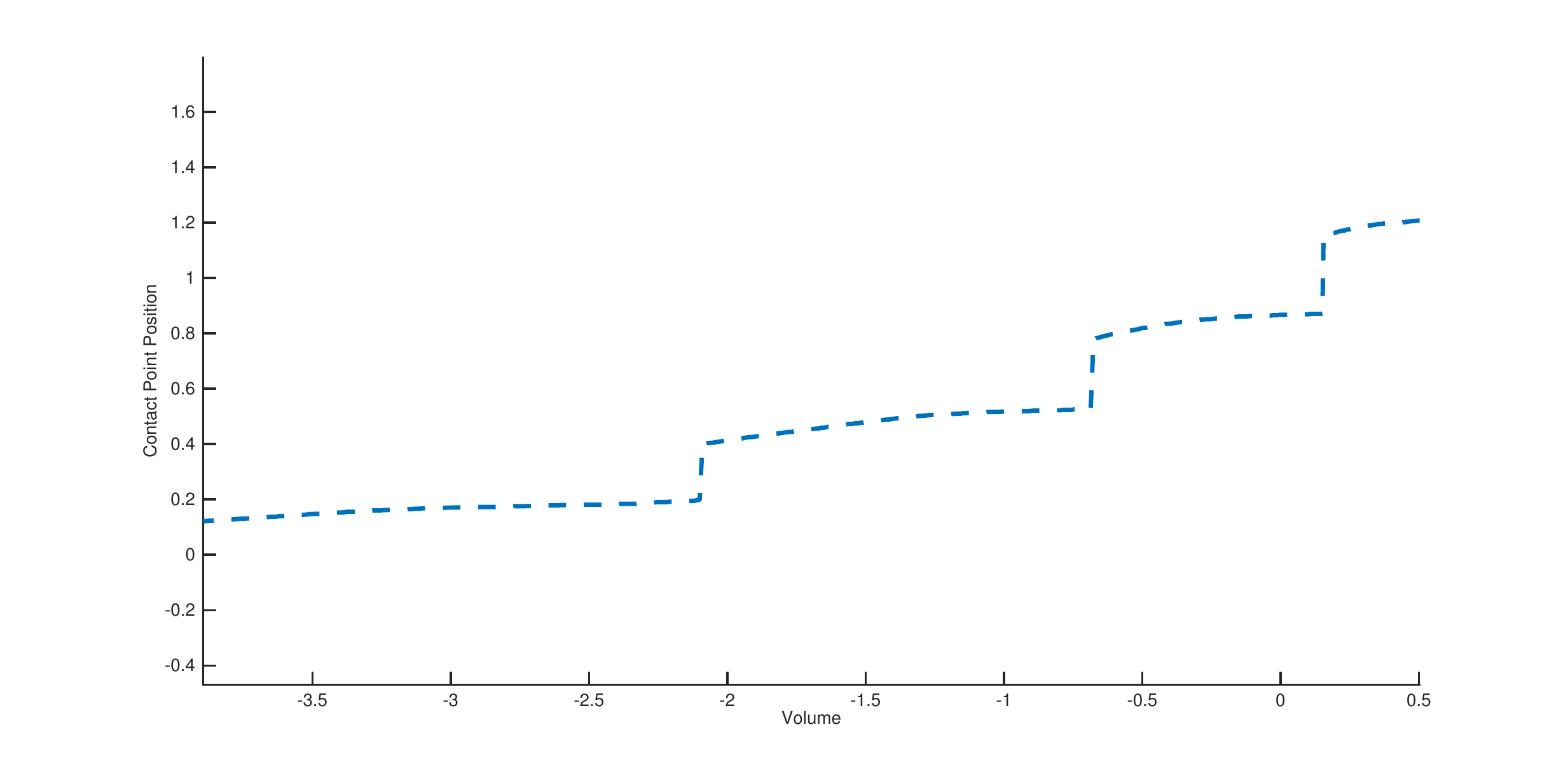}
\end{center}
\caption{The  stick-slip motion of a drop on a rough surface when the volume is decreasing.  $\theta=\frac{\pi}{2}, k=4, \alpha=\frac{\pi}{6}$. }\label{fig:ROUGH_DE}
\end{figure}
In this section, we will simulate the contact angle hysteresis on a geometrically rough surface.
In our experiments, the computational domain is $[-\frac{\pi}{2},\frac{\pi}{2}] \times [-\frac{\pi}{2},\frac{\pi}{2}]$, and we take the solid surface of shape given by a sawtooth function $$y=-\frac{\pi}{4}+\tan(\alpha)\frac{\pi}{4k+2}|s((2k+1)x-\pi)|$$ where $s(x)$ is a sawtooth periodic function with period $2\pi$ defined as
$$s(x)=\left\lbrace\begin{matrix}
\frac{2}{\pi}(x+\pi)-1  & -\pi \leq x \leq0;\\
-\frac{2}{\pi}x+1  &  0\leq x \leq \pi.
\end{matrix}\right.$$
For a rough surface, it is more meaningful to see how the effective contact angle behaves when the volume of the drop is increased or decreased \cite{chen2013effective}. The effective contact angle is defined as the angle between the contact line and the horizontal surface (See Figure~\ref{fig:roughsolid}). Figure~\ref{fig:ROUGH} and Figure~\ref{fig:ROUGH_DE} show
the bahavior of the contact angle and the $x$-coordinate of the contact point for the case when $k=4, \alpha=\frac{\pi}{6}$.  Young's angle of the solid surface is $\theta_Y=\frac{\pi}{2}$. We can see obvious stick-slip motion when we increase or decrease the volume of the drop.
Furthermore, the advancing contact angle is almost $\frac{2\pi}{3}$ and the receding contact angle is approximately
$\frac{\pi}{3}$.

{ In  Figure~\ref{fig:rough}, again, we show two quasi-static drops.   One is in the process of expanding in volume (advancing) and
the other is in the process of reducing in volume (receding).  Similar to the chemically patterned surface case,  the two states have very different apparent contact angles corresponding to the contact angle hysteresis  on rough surfaces.}

\begin{figure}
\begin{center}
\includegraphics[scale=0.32]{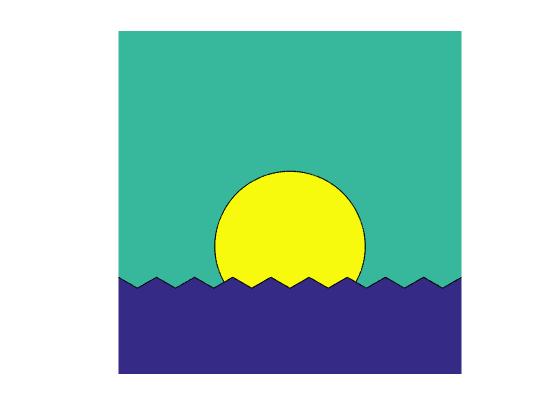}  \includegraphics[scale=0.32]{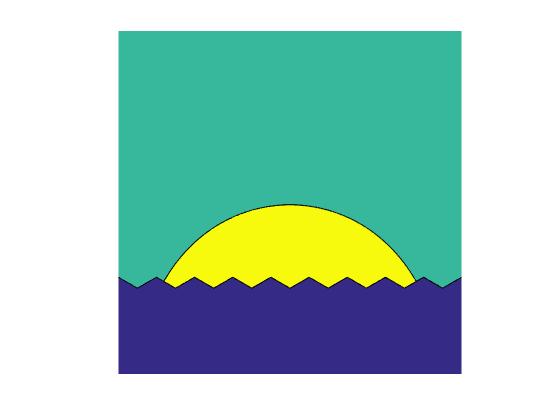}
\end{center}
\caption{Left: A quasi-static drop in the process of expanding in volume on a sawtooth rough surface when the volume is 1.178. Right: A quasi-static drop in the process of reducing in volume on a sawtooth rough surface when the volume is 1.178. $\theta=\frac{\pi}{2}, k=4, \alpha=\frac{\pi}{6}$.}
\label{fig:rough}\end{figure}

\section{Conclusion}
We develop an efficient threshold dynamics method for wetting on rough surfaces.  The method is based on minimization of the weighted surface area functional over an extended domain that includes the solid phase.   The method is  simple, stable with the complexity $O(N \log N)$ per time step and is not sensitive to the inhomogeneity or roughness  of the solid boundary.   The efficiency of the method can be further improved with adaptive mesh techniques with  more mesh points near the interface and contact line.

\section*{Acknowledgments}  This publication was based on work  supported in part by the Hong Kong RGC-GRF grants  605513 and 16302715 and the  RGC-CRF grant C6004-14G.  XM Xu is supported in part by  NSFC 11571354. We would also like to thank the anonymous referees for their
valuable comments and suggestions to help us to improve the paper.
\vspace{-0.1cm}




\section*{Reference}
\bibliographystyle{abbrv}
\bibliography{literatur.bib}

\end{document}